\newtheorem{theorem}{Theorem}
\newtheorem{lemma}{Lemma}
\newtheorem{corollary}{Corollary}
\newtheorem{proposition}{Proposition}
\tikzset{ brokenrect/.style={

    append after command={

      \pgfextra{

      \path[draw,#1]

       decorate[decoration={zigzag,segment length=0.3em, amplitude=.7mm}]

       {(\tikzlastnode.north east)--(\tikzlastnode.south east)}      

        -- (\tikzlastnode.south west)|-cycle;

        }}}}
\tikzset{ brokenrect2/.style={

    append after command={

      \pgfextra{

      \path[draw,#1]

       decorate[decoration={zigzag,segment length=0.3em, amplitude=.7mm}]

       {(\tikzlastnode.north west)--(\tikzlastnode.south west)}      

        -- (\tikzlastnode.south east)|-cycle;

        }}}}
\tikzset{cross/.style={cross out, draw=black, minimum size=2*(#1-\pgflinewidth), inner sep=0pt, outer sep=0pt},
cross/.default={1pt}}
\newcommand{\bigO}{\mathcal{O}}
\newcommand{\NP}{\mathcal{NP}}
\newcommand{\Cs}{\sum C_j}
\newcommand{\Cmax}{C_{\max}}
\newcommand{\Copt}{C^{{\sc OPT}}}
\newcommand{\Copts}{C^{{\sc OPT}_s}}
\newcommand{\Coptf}{C^{{\sc OPT}_f}}
\newcommand{\Calg}{C^{{\sc ALG}}}
\newcommand{\Salg}{\sigma^{{\sc ALG}}}
\newcommand{\J}{\mathcal{J}}
\newcommand{\tpart}{{\sc 3-Partition}}
\renewcommand{\|}{\vert}
\begin{document}
\title{Approximation algorithms for coupled task scheduling minimizing the sum of completion times}
%
\author{David Fischer\footnote{Institute of Algorithms and Complexity, Hamburg University of Technology, Blohmstraße 15, 21079 Hamburg, Germany, e-mail: da.fischer@tuhh.de.
} 
~and 
P\'{e}ter Gy\"{o}rgyi\footnote{Institute for Computer Science and Control, E\"{o}tv\"{o}s Lor\'{a}nd Research Network, Kende Str. 13-17., 1111 Budapest, Hungary, e-mail: gyorgyi.peter@sztaki.hu.
}
}
%
%
%
\maketitle              
%
%
%
%
%
\begin{abstract}
In this paper we consider the coupled task scheduling problem with exact delay times on a single machine with the objective of minimizing the total completion time of the jobs. 
We provide constant-factor approximation algorithms for several variants of this problem that are known to be $\NP$-hard, while also proving $\NP$-hardness for two variants whose complexity was unknown before. 
Using these results, together with constant-factor approximations for the makespan objective from the literature, we  also introduce the first results on bi-objective approximation in the coupled task setting.
\end{abstract}

\section{Introduction}\label{sec:intro}
The problem of scheduling coupled tasks with exact delays (CTP) was introduced by Shapiro~\cite{shapiro80} more than forty years ago. 
In this particular scheduling problem, each job has two separate tasks and a delay time. The goal is to schedule these tasks such that no tasks overlap, and the two tasks of a job are scheduled with exactly their given delay time in between them, while optimizing some objective function.
This problem has several practical applications, e.g., in pulsed radar systems, where one needs to receive the reflections of the transmitted pulses after a given period of time~\cite{elshafei04,farina80}, in improving the performance of submarine torpedoes~\cite{simonin11}, or in chemistry~\cite{ageev07}.


Research interest in the coupled task problem is strongly increasing in recent years, see Khatami et al.~\cite{khatami19} for a current, detailed overview of the topic. This research focuses mainly on variants of the general CTP with some additional restrictions on the job properties, and mainly tries to optimize the makespan~\cite{khatami19}.
Coupled task problems are often $\NP$-hard even in very special cases, but polynomial-time approximation algorithms with constant approximation factors have been developed for a number of them~\cite{ageev06,ageev07,ageev2016}.
Other objective functions have virtually not been considered though, until recently, when Chen and Zhang~\cite{chen2021} drew an almost full complexity picture for the problems of single-machine scheduling of coupled tasks, with the objective of minimizing the total sum of job completion times.
However, they did not give any approximation algorithms for $\NP$-hard CTP variants with this particular objective function. 
We fill this gap by giving a number of constant-factor approximation algorithms for most of these CTP variants. Additionally, we introduce two new, interesting variants, which we also prove to be $\NP$-hard, and also approximate one of these with a constant factor.

Formally, we are given a set of $n$ jobs $\J=\{1, 2,\ldots,n\}$, where each job $j$ has two tasks: $a_j$ and $b_j$. 
We call $a_j$ the first task, and $b_j$ the second task of job $j$.
In order to simplify our notations, we will denote the processing time of these tasks also by $a_j$ and $b_j$; the meaning of these notations will be clear from context. The sum $(a_j + b_j)$ is then called the total processing time of a job $j$.
These tasks have to be scheduled on a single machine with a given delay time~$L_j$ in-between, which means if the machine completes $a_j$ at some time point $t$, then we have to schedule $b_j$ to start exactly at $t+L_j$. Preemption is not allowed.
Note that it is possible to schedule other tasks on the machine during this delay time, but the tasks themselves cannot overlap.
Our objective is to find a feasible schedule $\sigma$ that minimizes the total of job completion times, where a feasible schedule is defined as a schedule that fulfills all of the requirements above. Such a $\sigma$ is then called optimal schedule or optimal solution for the CTP instance.
For a given schedule $\sigma$, the starting time $S_j$ of $j$ is the starting time of $a_j$, while the completion time $C_j$ of $j$ is the completion time of $b_j$.
An example of CTP is visualized in \autoref{fig:exampleCTP}.
For a schedule $\sigma$, a \emph{gap} is a period between time points $t_1$ and $t_2$ such that the machine is idle between~$t_1$ and $t_2$ and busy at both $t_1$ and $t_2$. The length of a gap is the length of this time window. A partial schedule $\sigma^p$ is a schedule for a subset of the jobs~$\J$.
\begin{figure}
\centering

\begin{tikzpicture}

\def\ox{0} 
\def\oy{0} 
\coordinate(o) at (\ox,\oy);

\def\tl{10.0}
\draw [-latex](\ox,\oy) node[above left]{} -- (\ox+\tl,\oy) node[above,font=\small]{$t$};

\def\pi{0.5}
\tikzstyle{mystyle}=[draw, minimum height=0.5cm,rectangle, inner sep=0pt,font=\scriptsize]
\tikzstyle{mystyle2}=[draw = none, minimum height=0.25cm,rectangle, inner sep=0pt,font=\scriptsize]

\draw (0,0) -- (0,-0.2) node[below] {\tiny $S_1$};
\draw (\pi,0) -- (\pi,-0.2) node[below] {\tiny $S_2$};
\draw (2.5,0) -- (2.5,-0.2) node[below] {\tiny $S_3$};
\draw (7.5,0) -- (7.5,-0.2) node[below] {\tiny $C_1$};
\draw (4.75,0) -- (4.75,-0.2) node[below] {\tiny $C_2$};
\draw (9.5,0) -- (9.5,-0.2) node[below] {\tiny $C_3$};

\draw [<->] (\pi,0.65)--node[above]{\small $L_1$}(\pi+5,0.65); 

\node(b1) [above right=-0.01cm and -0.01cm of o,mystyle, minimum width=\pi cm,pattern=north west lines, pattern color=red]{};
\node(b1_t) [mystyle2, fill = white] at (b1.center) {$a_1$};
\node(b2) [right=5cm of b1,mystyle, minimum width=2 cm,pattern=north west lines, pattern color=red]{};
\node(b2_t) [mystyle2, fill = white] at (b2.center) {$b_1$};
\node(b3) [right=0cm of b1,mystyle, minimum width=1.5 cm,pattern=north east lines, pattern color=green]{};
\node(b3_t) [mystyle2, fill = white] at (b3.center) {$a_2$};
\node(b4) [right=2cm of b3,mystyle, minimum width=0.75 cm,pattern=north east lines, pattern color=green]{};
\node(b4_t) [mystyle2, fill = white] at (b4.center) {$b_2$};
\node(b5) [right=0.5cm of b3,mystyle, minimum width=1 cm,pattern=vertical lines, pattern color=yellow]{};
\node(b5_t) [mystyle2, fill = white] at (b5.center) {$a_3$};
\node(b6) [right=1cm of b2,mystyle, minimum width=1 cm,pattern=vertical lines, pattern color=yellow]{};
\node(b6_t) [mystyle2, fill = white] at (b6.center) {$b_3$};
\end{tikzpicture}

\caption{An example for a feasible solution for an instance of CTP with $n = 3$. The patterns are matching for the two tasks of each job $j$. For simplicity, the delay time is only visualized for job $1$. The total completion time of the solution is $C_1 + C_2 + C_3$.}
\label{fig:exampleCTP}
\end{figure}
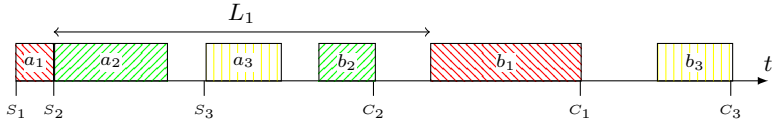

We say a job is the $j^{\textnormal{th}}$ finishing job in a schedule if its second task is scheduled after the second tasks of exactly $j-1$ many jobs have finished.
We say a job is the $j^{\textnormal{th}}$ starting job in a schedule if its first task is scheduled after the first tasks of exactly $j-1$ many jobs have been scheduled.
Let $\Copt$ be the sum of completion times of an optimal solution. 
For a fixed optimal schedule, let $\Coptf_j$ be the completion time of the $j^{\textnormal{th}}$ finishing job in that schedule.
Analogously, let $\Copts_j$ be the completion time of the $j^{\textnormal{th}}$ starting job in that schedule.
Observe that $\Copt = \sum_{j=1}^{n} \Coptf_j=\sum_{j=1}^{n} \Copts_j$.
In any proof in this work, we denote the sum of completion times of the solution  produced by the currently used algorithm as $\Calg$.
In the schedule $\Salg$ created by this algorithm, $S_j$ and $C_j$ denote the starting time and  completion time of job~$j$, respectively.

Throughout this paper, we will use the classic $\alpha \| \beta\|\gamma$ notation system of Graham et al.~\cite{graham79}, with $\alpha$ representing the machine environment, $\beta$ representing the characteristics of the jobs, and $\gamma$ representing the objective function. 
$1\|(a_j, L_j, b_j)\|\Cs$ then denotes the general CTP for minimizing the sum of completion times, where each job $j$ consists of a pair of tasks of processing times $a_j$ and $b_j$, respectively, with an exact time delay $L_j$ between the completion time of its first task and the start time of its second task. 
As we also look at more restricted variants of CTP, we fix some naming conventions to easily express these restrictions in Graham notation. 
If in a restricted CTP environment, some task of the jobs are fixed or even constant for each job $j$, we denote this task without the subscript '$j$', or by the specific constant value; e.g. CTP, where the delay time $L_j$ is fixed to some $L$ for all jobs $j$ is denoted as $1\|(a_j, L, b_j)\|\Cs$. 
If in a restricted CTP environment, some tasks of the same job always have the same value, we denote them by $p$ instead of their usual descriptor; e.g., CTP where the first and second task of each job $j$ have the same processing time ($a_j = b_j, \forall j$) is denoted as $1\|(p_j, L_j, p_j)\|\Cs$.
This is in line with the standard notation for the coupled task scheduling problems, as seen for example in Chen and Zhang~\cite{chen2021}.
Another way to restrict CTP is to fix the processing sequence of the first tasks of the jobs, this is indicated by~$\pi_a$ in the $\beta$-field of the Graham notation.

In this paper we extend the complexity results of Chen and Zhang~\cite{chen2021} and Kubiak~\cite{kubiak22} by proving the strong $\NP$-hardness of $1\|(p_j, L, p_j)\|\Cs$ and $1\|(1,L_j,1, \pi_a)\|\sum C_j$. To achieve the former, we first prove strong $\NP$-hardness of the corresponding makespan variant $1\|(p_j, L, p_j)\|\Cmax$, strengthening a result by Ageev and Ivanov~\cite{ageev2016}, who prove weak $\NP$-hardness of this problem.
We also give constant-factor approximations for most CTP variants in a single machine environment with the sum of completion times objective function, see \autoref{fig:overview}.
The existence of a constant-factor approximation algorithm for the variants $1\|(a_j, L_j, b_j)\|\Cs$, $1\|(a, L_j, b_j)\|\Cs$, $1\|(a_j, L_j, b)\|\Cs$, and $1\|(p_j, L_j, p_j)\|\Cs$ is still open (see the upper part of the figure).

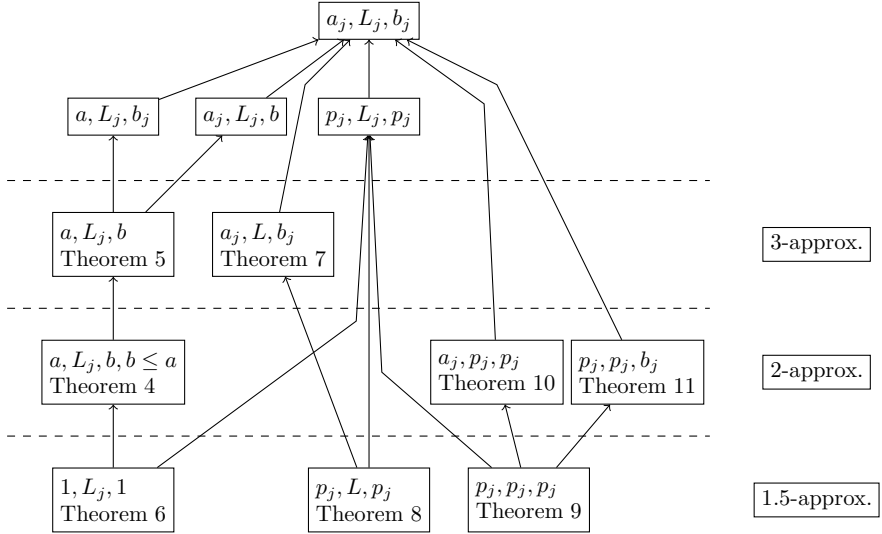
\begin{figure}[h!]
\centering

\begin{tikzpicture}[scale=0.85, every node/.style={transform shape}]
\tikzstyle{style_a}=[minimum height=1cm, shape=rectangle,draw=black, align=left]
    \node[shape=rectangle,draw=black] (A) at (2,-0.5) {$a_j, L_j, b_j$};
    
    \node[shape=rectangle,draw=black] (B) at (-2,-2) {$a, L_j, b_j$};
    \node[shape=rectangle,draw=black] (C) at (0,-2) {$a_j, L_j, b$};
    \node[shape=rectangle,draw=black] (D) at (2,-2) {$p_j, L_j, p_j$};
    
    \node[shape=rectangle,draw=black] (3) at (9,-4) {$3$-approx.};
    \node[style_a] (E) at (0.5,-4) {$a_j, L, b_j$ 
    \\ \autoref{lemma:ajLbj}};
    \node[style_a] (H) at (-2,-4) {$a, L_j, b$ \\ \autoref{lemma:aLjbgreaterb}};
    
    \node[shape=rectangle,draw=black] (2) at (9,-6) {$2$-approx.};
    \node[style_a] (J) at (-2,-6) {$a, L_j, b, b \leq a$ \\ \autoref{lemma:aLjbsmallerb}};
    \node[style_a] (K) at (4,-6) {$a_j, p_j, p_j$ \\ \autoref{thm:ajpjpj}};
    \node[style_a] (L) at (6.2,-6) {$p_j, p_j, b_j$ \\ \autoref{thm:pjpjbj}};
    
    \node[shape=rectangle,draw=black] (1.5) at (9,-8) {$1.5$-approx.};
    \node[style_a] (M) at (2,-8) {$p_j, L, p_j$ \\ \autoref{lemma:pjLpj}};
    \node[style_a] (N) at (-2,-8) {$1, L_j, 1$ \\ \autoref{lemma:1Lj1}};
    \node[style_a] (O) at (4.5,-8) {$p_j, p_j, p_j$ \\ \autoref{thm:pjpjpj}};
    
	\path [-] (-3.66,-3) edge[dashed] (7.33,-3);
	\path [-] (-3.66,-5) edge[dashed] (7.33,-5);
	\path [-] (-3.66,-7) edge[dashed] (7.33,-7);         
    
    \path [->] (B) edge (A);
    \path [->] (C) edge (A);
    \path [->] (D) edge (A);
    
    \path [->] (H) edge (B);
    \path [->] (H) edge (C);
    \draw [->] (E) -- (1,-1.5) -- (A);
    
    \path [->] (J) edge (H);
    \draw [->] (K) --(3.8,-1.8) -- (A);
    \draw [->] (L) -- (4,-1.5) --(A);
    
    \path [->] (N) edge (J);
    \path [->] (M) edge (E);
    \path [->] (O) edge (K);
    \path [->] (O) edge (L);
    
    \path [->] (M) edge (D);
    \draw[->] (N) -- (1.8,-5.2)-- (D);
    \draw[->] (O) --(2.2,-6) --(D);
\end{tikzpicture}

\caption{Overview of our approximation results for different variants of $1\|(a_j,L_j,b_j)\|\sum C_j$. The variants, identified by their special constraints, are grouped into layers of equal approximation factors, with the respective Theorem proving this approximation factor linked next to it. A directed edge from variant "A" to variant "B" indicates that "B" is a generalization of "A".}
\label{fig:overview}
\end{figure}

We also look at bi-objective optimization for CTP with both the makespan and the sum of completion times objectives, under the goal of minimizing both objectives without prioritization.
For this, we use the concept of $(\rho_1, \dots, \rho_z)$-approximation, as introduced by Jiang et al.~\cite{jiang22} for simultaneously minimizing~$z$ objectives, where $\rho_i$ is the approximation factor of the $i^{\textnormal{th}}$ objective function to be minimized, for $i = 1, \dots, z$.
This concept is a generalization of the bi-objective $(\rho_1, \rho_2)$-approximation of scheduling problems minimizing makespan and sum of completion times, as described first by Stein and Wein~\cite{stein97}.
As far as we are aware, there are no results of minimizing the two objectives makespan and sum of completion times simultaneously in a coupled task setting, even though this topic is well researched in other scheduling environments.
We start to close this gap by directly using results of Stein and Wein~\cite{stein97}, together with constant-factor approximations of the makespan~\cite{ageev06, ageev07, ageev2016} and our approximation results on the sum of completion times objective, to give a number of $(\rho_1, \rho_2)$-approximation results for this problem.
The general bi-objective CTP is denoted as $1\|(a_j, L_j, b_j)\|(\Cmax, \Cs)$ in $\alpha\|\beta\|\gamma$ notation system, with the $\beta$-field following the previously discussed naming conventions depending on the considered variant's restrictions.

This work is structured as follows.
We first give a brief literature review of the topic in \autoref{sec:lit_rev}.
We present our complexity results in \autoref{sec:complex}.
Our approximation results are stated in \autoref{sec:main}. There, we give detailed descriptions and run time analyses of our algorithms, as well as proofs on approximation factors for problem variants whose instances can be solved by these algorithms.
We then use these results in \autoref{sec:biobj} to give $(\rho_1, \rho_2)$-approximations for the bi-objective CTP with both the makespan and sum of completion time objectives.
Finally, we give concluding remarks and an outlook on future research in \autoref{sec:concl}.

\section{Literature review}
\label{sec:lit_rev}
Research on coupled task scheduling on a single machine began when Shapiro~\cite{shapiro80} proved the $\NP$-hardness of the general problem $1\|(a_j, L_j, b_j)\|\Cmax$, where both tasks, as well as the delay time between them, can be different for each job, and the makespan is to be minimized.

In subsequent years, the $\NP$-hardness was also shown for more restricted variants of this problem, specifically $1\|(p_j, p_j, p_j)\|\Cmax$, $1\|(a_j, L, b)\|\Cmax$, $1\|(a, L, b_j)\|\Cmax$ and $1\|(p, L_j, p)\|\Cmax$ by Orman and Potts~\cite{orman97}. 
Some CTP variants minimizing the makespan are $\NP$-hard even when the processing times of all jobs are fixed to $1$, as shown by Yu et al.~\cite{yu04} for $1\|(1, L_j, 1)\|\Cmax$. Condotta and Shakhlevich~\cite{condotta12} showed $\NP$-hardness for the even more restricted variant  $1\|(1, L_j, 1, \pi_a)\|\Cmax$, where $\pi_a$ indicates a fixed processing sequence for the first tasks of all jobs.

For most of these problems, polynomial-time constant-factor approximation algorithms have been developed. Ageev and Kononov~\cite{ageev06} give such algorithms, as well as inapproximability bounds, for the general $1\|(a_j, L_j, b_j)\|\Cmax$ problem, and the restricted variants $1\|(a_j, L_j, b_j, a_j \leq b_j)\|\Cmax$, $1\|(a_j, L_j, b_j, a_j \geq b_j)\|\Cmax$, and $1\|(p_j, L_j, p_j)\|\Cmax$.
Related to this work, Ageev and Baburin~\cite{ageev07} give an approximation algorithm for the  $1\|(1, L_j, 1)\|\Cmax$ variant.
Additionally, Ageev and Ivanov~\cite{ageev2016} give approximation algorithms and inapproximability bounds for  $1\|(a_j, L, b_j)\|\Cmax$, $1\|(a_j, L, b_j, a_j \leq b_j)\|\Cmax$ and $1\|(p_j, L, p_j)\|\Cmax$.

For other restricted variants, polynomial-time algorithms do exist. This was shown by Orman and Potts~\cite{orman97} for the variants $1\|(p, p, b_j)\|\Cmax$, $1\|(a_j, p, p)\|\Cmax$, and $1\|(p, L, p)\|\Cmax$, as well as Hwang and Lin~\cite{hwang11} for the variant $1\|(p_j, p_j, p_j), \text{fjs}\|\Cmax$, where "fjs" denotes that the sequence of jobs in the schedule is fixed.

Research interest in the topic of coupled task scheduling remained high also in the last years.
Békési et al.~\cite{bekesi22} recently introduced and gave a constant-factor approximation algorithm for the novel problem variant $1\|(1, L_j, 1), L_j \in \{L_1, L_2\}\|\Cmax$, where there are only two different delay times in an instance; the complexity status of this variant is still unknown.
Khatami and Selhipour~\cite{khatami21_1} tackle the coupled task scheduling problem differently, giving upper and lower bounds on the solution through different procedures, and proposing a binary heuristic search algorithm for CTP.
The same authors give optimal solutions under certain conditions, and a general heuristic for the problem variant with fixed first tasks and delay times, but time-dependent processing times for the second tasks~\cite{khatami21_2}.
Bessy and Giroudeau~\cite{bessy19} investigate CTP under parameterized complexity, with the considered parameter $k$ relating to the question if $k$ coupled tasks have a completion time before a fixed due date.

Interest is also high in scheduling coupled tasks in \emph{2-machine flow shop} environments, denoted by $F2$ in the machine environment notation. 
For scheduling coupled tasks in this environment, we are given two machines instead of one, and each of the two task of one job is additionally assigned one of these two machines to be processed on.
$\NP$-hardness is shown for a number of flow shop problems minimizing the makespan, e.g., for $F2\|(1, L_j, 1)\|\Cmax$ by Yu et al.~\cite{yu04}, but $\NP$-hardness is also known for variants minimizing the total completion time, e.g., $F2\|(a_j, L, b_j)\|\Cs$, as shown by Leung et al.~\cite{leung07}. 
Several flow shop problem variants minimizing the total completion time are also polynomial solvable, as proven by Leung et al.~\cite{leung07} and Huo et al.~\cite{huo09}.

All of the mentioned literature for scheduling coupled tasks on a single machine only considers the objective of minimizing the makespan though, and, as Khatami et al.~\cite{khatami19} note in their survey of CTP, “there has been no published research investigating the single-machine setting with an objective function other than the makespan, except for those in the cyclic setting.”
This task is finally tackled by Chen and Zhang~\cite{chen2021}, who draw a nearly full complexity picture of problem  of minimizing the total of job completion times.
However, they do not give any approximation algorithms for problem variants they prove to be $\NP$-hard.
Recently, Kubiak~\cite{kubiak22} slightly extended these complexity results by proving $\NP$-hardness of $1\|(1,L_j,1)\|\Cs$ and $1\|\langle 1,L_j,1 \rangle\|\Cs$. 
In the latter problem variant, the delay time between the two tasks does not have to be exactly, but at most $L_j$.

In scheduling theory, there is also a great interest in bi-objective and multi-objective optimization.
Here, instead of trying to optimize just one objective function in a given problem setting, one aims to optimize two or more objective functions at the same time, see Deb~\cite{deb14} or Hoogeveen~\cite{hoogeveen05} for an overview.
Since, until recently, virtually only the makespan objective has been considered for coupled tasks scheduling problems, we do not know any such results in the couple task environment.
This is not true for other scheduling environments though, where especially bi-objective optimization is intensively researched, particularly for the two objectives of minimizing makespan and sum of completion times.
Here, many approaches focus on establishing a trade-off relationship between the two competing objectives, either by \emph{Pareto optimization} (finding one or all Pareto optimal solutions) or \emph{simultaneous optimization} (minimizing all objectives without prioritization)~\cite{jiang22}.
Since these problems are generally $\NP$-hard (see e.g. Hoogeeven~\cite{hoogeveen05}), approximation is a popular method for both mentioned approaches.
Angel et al.~\cite{angel03} give fully polynomial time approximation schemes for the Pareto curve of single-machine batching problems and parallel machine scheduling problems on the two objectives.
Bampis and Knonov~\cite{bampis05} consider $(\rho_1, \rho_2)$-approximations of the two objectives for scheduling problems with communication delays.
A very recent work by Jiang et al.~\cite{jiang22} is concerned with $(\rho_1, \rho_2)$-approximations for scheduling on parallel machines, with different approximation ratios for different fixed numbers of 	machines.



\section{Complexity results}
\label{sec:complex}

In this section we prove that both $1\|(p_j,L,p_j)\|\Cs$ and $1\|(1,L_j,1, \pi_a)\|\Cs$ are strongly $\NP$-hard. We use reductions from corresponding makespan minimization problems for both results. As we need strong $\NP$-hardness of the corresponding problems for both our reductions, we additionally prove strong $\NP$-hardness of $1\|(p_j,L,p_j)\|\Cmax$; weak $\NP$-hardness was already proven for this problem by Ageev and Ivanov~\cite{ageev2016}.

\begin{theorem}
\label{thm:NPpjLpjCmax}
$1\|(p_j,L,p_j)\|\Cmax$ is strongly $\NP$-hard.
\end{theorem}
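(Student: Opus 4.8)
The plan is to reduce from \tpart{}, which is strongly $\NP$-hard and has all of its numbers bounded by a polynomial in the input size; this is precisely what lets us upgrade the known weak $\NP$-hardness of Ageev and Ivanov to strong $\NP$-hardness. Recall that a \tpart{} instance consists of $3m$ positive integers $e_1,\dots,e_{3m}$ and a bound $B$ with $\sum_{i} e_i = mB$ and $B/4 < e_i < B/2$ for every $i$, and the question is whether the $e_i$ can be partitioned into $m$ triples, each summing to exactly $B$. The size condition $B/4 < e_i < B/2$ guarantees that any subset summing to $B$ has cardinality exactly three, a fact I would use to turn a sum constraint into a cardinality constraint.

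From such an instance I would build a $1\|(p_j,L,p_j)\|\Cmax$ instance consisting of two kinds of jobs: first, $3m$ \emph{item jobs}, one per integer, where the job for $e_i$ has both tasks of length $p_i = e_i$; and second, a small number of \emph{enforcer jobs} with large, but still polynomially bounded, processing times. The common delay $L$ and the enforcer jobs are chosen so that, in any schedule whose makespan does not exceed a target value $K$, the enforcers are pinned into an essentially unique position and thereby carve the timeline into $m$ congruent ``slots,'' each of capacity exactly $B$, together with matching regions exactly $L$ time units later that must absorb the corresponding second tasks. Because $L$ is the same for all jobs, an item placed in one slot has its second task forced into the matching later region, so a slot can be filled to capacity $B$ by item \emph{first} tasks precisely when the matching region is simultaneously filled by the corresponding \emph{second} tasks; this is the structural feature that makes the fixed-delay variant behave like a packing problem. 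I would set $K$ to the makespan of the intended ``perfectly packed'' schedule in which the three item jobs of each triple occupy one slot.

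The forward direction is routine: given a valid partition into $m$ triples, place the three items of the $k$-th triple in the $k$-th slot in any order, insert the enforcers in their designated positions, and verify by direct computation that the schedule is feasible (all delay-$L$ constraints hold because each slot is matched to a region exactly $L$ later) and has makespan exactly $K$. The reverse direction is the main obstacle and needs the most care. Here I must show that a makespan of at most $K$ forces the rigid skeleton: that the enforcers cannot be shifted or reordered without creating idle time that pushes the makespan above $K$, that no item task may straddle a slot boundary or leak into an enforcer region, and hence that each slot receives item first tasks of total length exactly $B$. Combining this with $B/4 < e_i < B/2$ yields exactly three items per slot, i.e.\ a valid \tpart{} solution. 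The delicate steps are designing the enforcer jobs so that their own coupled-task constraints are compatible with the frame while still making it rigid, and ruling out unintended interleavings that exploit the delay windows; once rigidity is established, the equivalence together with the polynomial, strongly size-bounded nature of the construction gives strong $\NP$-hardness.
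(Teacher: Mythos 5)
Your overall strategy coincides with the paper's: both reduce from \tpart{}, encode each integer $e_i$ as an item job with $p_i=e_i$, and add large (polynomially bounded) jobs whose only purpose is to create rigid gaps of capacity exactly $E$ that must be packed by item tasks. However, what you have written is a plan rather than a proof, and the one piece you defer --- ``designing the enforcer jobs so that their own coupled-task constraints are compatible with the frame while still making it rigid'' --- is precisely the heart of the argument. In the $(p_j,L,p_j)$ variant the enforcers cannot be given a different delay: they are themselves coupled jobs with the \emph{same} exact delay $L$ and equal task lengths, so they cannot simply ``carve the timeline into $m$ congruent slots'' on their own. The paper's resolution is specific: take $q$ large jobs of length $R>3qE$ with $L=R+E$, observe that in any schedule of makespan at most $z=q(2R+3E)$ the large jobs must pair up with exactly one large task inside each other's delay window, and that each interleaved pair then leaves one gap of length exactly $E$ between its two first tasks and one between its two second tasks. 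Without an explicit gadget of this kind, the claimed rigidity is unsubstantiated.

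There is also a concrete inaccuracy in the structural claim you do make: that a slot filled to capacity $B$ by item \emph{first} tasks has its second tasks absorbed by a ``matching region exactly $L$ later.'' With exact delays this is false as stated: if three first tasks of lengths $e_1,e_2,e_3$ sit contiguously in $[t,t+B]$, their second tasks occupy $[t+e_1+L,\,t+2e_1+L]$, $[t+e_1+e_2+L,\,t+e_1+2e_2+L]$ and $[t+B+L,\,t+B+e_3+L]$; these are non-overlapping only for a suitable ordering of the items, they are not contiguous, and they span a window of length $B-e_1+e_3\neq B$ in general. The paper avoids this by filling the gap between the two \emph{first} tasks of a large pair with \emph{second} tasks of one triple (whose first tasks then lie, with internal idle time, up to $2E$ before the block) and the gap between the two \emph{second} tasks with \emph{first} tasks of another triple (whose second tasks overhang up to $2E$ after the block), with each triple sorted monotonically to avoid self-intersection. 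So the direction of your reduction is right, and the source problem and item encoding match the paper exactly, but the gadget you would need to complete the argument is genuinely different from the one you sketch, and the sketch as given would not assemble into a correct forward direction, let alone the rigidity argument for the reverse direction.
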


\begin{proof}
We reduce the well-known strongly $\NP$-hard problem \tpart~\cite{garey79} to $1\|(p_j,L,p_j)\|\Cmax$. The reduction is similar to the idea by Ageev and Ivanov~\cite{ageev2016} for reducing the weakly $\NP$-hard {\sc Partition} problem to $1\|(p_j,L,p_j)\|\Cmax$. First, let us formally state the \tpart{} problem.

\tpart{}

\textbf{Instance:} A set $Q = \{1, \dots, 3q\}$, and for each element $i \in Q$, a corresponding positive integer $e_i$ such that $\sum_{i \in Q} e_i = qE$, for some positive integer $E$, and $E/4 < e_i < E/2$.

\textbf{Question:} Does the set $Q$ partition into $q$ disjoint subsets $Q_1, \dots, Q_q$ such that $\sum_{i \in Q_j} e_i = E$, for $j = 1, \dots, q$?

\tpart{} remains strongly $\NP$-hard even if we assume $q$ is even. 
Consider an instance $I$ of \tpart{} where $q$ is even. 
We define an instance $I'$ of $1\|(p_j,L,p_j)\|\Cmax$ with $4q$ jobs as follows: 
\begin{itemize}
\item jobs $i = 1, \dots, 3q$ have $p_i = e_i$ and $L = R + E$ (small jobs),
\item jobs $i = 3q+1, \dots, 4q$ have $p_i = R$ and $L = R + E$ (large jobs),
\end{itemize}
for some $R > 3qE$. We prove there is a solution for $I$ if and only if there is a solution for $I'$ with makespan $\Cmax \leq z$, with $z := q(3E + 2R)$. The theorem follows from this statement.

\begin{enumerate}
\item
Assume there is a solution for $I$. Then there exist $Q_1, \dots, Q_q$, such that $\sum_{i \in Q_j} e_i = E$, for $j=1,\dots,q$. In this case, we create a schedule $\sigma$ for $I'$ with makespan at most $z$.
We schedule the jobs in blocks.
Let $B$ be an arbitrary block.
It consists of $2$ large jobs ($i$ and $i'$) and $6$ small jobs, which corresponds to items from some $Q_j$ and $Q_{j'}$.
Let $(j_1, j_2, j_3)$ and $(j'_1, j'_2, j'_3$) be the small jobs corresponding to the items in $Q_j$ and $Q_{j'}$, respectively.
Assume that $p_{j_1}\geq p_{j_2}\geq p_{j_3}$ and  $p_{j'_1}\leq p_{j'_2}\leq p_{j'_3}$.

We schedule $a_{i'}$ directly before $b_{i}$. 
We schedule $b_{j_1}, b_{j_2}$ and $b_{j_3}$ in this order in the gap between $a_i$ and $a_{i'}$ and $a_{j'_1},a_{j'_2}$ and $a_{j'_3}$ in this order in the gap between $b_i$ and $b_{i'}$, see \autoref{fig:nphardcmax}.
Observe that the job tasks do not intersect, because  the length of the gap between $a_i$ and $a_{i'}$ as well as the gap  between $b_i$ and $b_{i'}$ is exactly $E$.
The length of block $B$ is  at most $4R + 6E$, because $a_{j_1}$ starts at most $2E$ before $a_i$, and $b_{j'_3}$ completes at most $2E$ after $b_{i'}$. 
We create such blocks for all jobs, resulting in $q/2$ blocks in total, and schedule these blocks directly one after another. 
Thus, the resulting schedule $\sigma$ has makespan $\Cmax \leq q/2 (4R + 6E) = z$.

\item Now assume that the instance $I'$ has a schedule $\sigma$ with makespan $\Cmax \leq z$. 
Due to the fixed delay times, the order is the same for the first and the second tasks in $\sigma$.
Consider an arbitrary large job $i$.
There has to be exactly one task of another large job $i'$ between $a_i$ and $b_i$.
There cannot be more than one task of a large job between $a_i$ and $b_i$. If there was no task of a large job between $a_i$ and $b_i$, the makespan of $\sigma$ would be larger than $z$, as the total processing time of the large jobs is $2qR$, and $L=R+E$, resulting in a minimum makespan of $\Cmax\geq 2qR+(R+E)> q(2R+3E)=z$, due to $R>3qE$.
Observe that if a task of job $i'$ is scheduled in the delay time of $i$, then a task of $i$ is scheduled in the delay time of $i'$.
This means we can partition the large jobs into pairs where the jobs within each pair are interleaved in the above way.

Consider an arbitrary pair of large jobs $(i,i')$ and assume that $S_i<S_{i'}$. 
There cannot be any task of a small job scheduled between $a_{i'}$ and $b_i$, as a first task of a small job would imply an intersection of its second task with $b_{i'}$, and a second task of a small job would imply an intersection of its first task with $a_i$, due to the fixed delay times.
Consider an arbitrary small job $j$. 
If none of its tasks is scheduled in the gap between $a_i$ and $b_{i'}$ of any pair of large jobs $(i,i')$, then there is also no task of a large job scheduled in the delay time of $j$ due to the fixed delay times.
This implies the makespan of $\sigma$ is at least $2qR+2e_j+R+E>z$, because the total processing time of the large jobs together with $j$ is $2qR+2e_j$, and the delay time of $j$ is $R+E$.

Therefore, for any small job $j$, there is a pair of large jobs $(i,i')$, where at least one of the tasks of $j$ is scheduled in the gap between $a_i$ and $a_{i'}$ or in the gap between $b_i$ and $b_{i'}$.
For any pair of large jobs the length of the gap between $a_i$ and $a_{i'}$ is at most $E$, the same holds for the length of the gap between $b_i$ and~¸$b_{i'}$.
Thus, the total length of these gaps is at most $q/2\cdot(2E)=qE$, which is exactly half of the total processing time of the small jobs.
Therefore, the length of each such gap must be exactly $E$, and they must be completely filled with tasks of small jobs.
As $E/4 < e_j < E/2$, there are always exactly three tasks of small jobs in each such gap. 
This partitions the small jobs into $q$ sets $Q_j$ of $3$ jobs each, with $\sum_{x \in Q_j} e_x = E$ for each $j = 1, \dots, q$, which gives us a feasible solution for the \tpart{} instance $I$.
\end{enumerate}

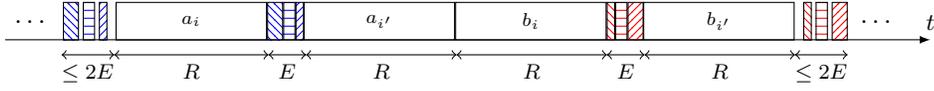
\begin{figure}
\centering
\begin{tikzpicture}
\def\ox{0} 
\def\oy{0} 
\coordinate(o) at (\ox,\oy);

\def\tl{12.3}
\draw [-latex](\ox,\oy) node[above left]{} -- (\ox+\tl,\oy) node[above,font=\small]{$t$};

\def\pi{0.5}
\tikzstyle{mystyle}=[draw, minimum height=0.5cm,rectangle, inner sep=0pt,font=\scriptsize]
\tikzstyle{mystyle}=[draw, minimum height=0.5cm,rectangle, inner sep=0pt,font=\scriptsize]

\draw [<->]  (0.75,-0.2)--node[below] {\footnotesize $\leq 2 E$} (1.45,-0.2);
\draw [<->]  (1.45,-0.2)--node[below] {\footnotesize $R$} (3.5,-0.2);
\draw [<->]  (3.5,-0.2)--node[below] {\footnotesize $E$} (4,-0.2);
\draw [<->]  (4,-0.2)--node[below] {\footnotesize $R$} (6,-0.2); 
\draw [<->]  (6,-0.2)--node[below] {\footnotesize $R$} (8,-0.2);
\draw [<->]  (8,-0.2)--node[below] {\footnotesize $E$} (8.5,-0.2);
\draw [<->]  (8.5,-0.2)--node[below] {\footnotesize $R$} (10.5,-0.2);
\draw [<->]  (10.5,-0.2)--node[below] {\footnotesize $\leq 2 E$} (11.2,-0.2);


\node(d1) [above right=-0.01cm and 0cm of o, minimum width=\pi cm, minimum height= \pi cm]{$\dots$};

\node(h1) [right=0.75cm of d1,mystyle, minimum width=2 cm]{$a_{i}$};
\node(h2) [right=2.5cm of h1,mystyle, minimum width=2 cm]{$b_{i}$};
\node(h3) [left=0cm of h2,mystyle, minimum width=2 cm]{$a_{i'}$};
\node(h4) [right=2.5cm of h3,mystyle, minimum width=2 cm]{$b_{i'}$};

\node(b1) [right=0cm of h1,mystyle,pattern=north west lines, pattern color=blue, minimum width=0.2 cm]{};
\node(b2) [right=0cm of b1,mystyle,pattern=horizontal lines, pattern color=blue, minimum width=0.15 cm]{};
\node(b3) [right=0cm of b2,mystyle,pattern=north east lines, pattern color=blue, minimum width=0.1 cm]{};
\node(b4) [left=2.5cm of b1,mystyle,pattern=north west lines, pattern color=blue, minimum width=0.2 cm]{};
\node(b5) [left=2.5cm of b2,mystyle,pattern=horizontal lines, pattern color=blue, minimum width=0.15 cm]{};
\node(b6) [left=2.5cm of b3,mystyle,pattern=north east lines, pattern color=blue, minimum width=0.1 cm]{};

\node(b7) [right=0cm of h2,mystyle,pattern=north west lines, pattern color=red, minimum width=0.1 cm]{};
\node(b8) [right=0cm of b7,mystyle,pattern=horizontal lines, pattern color=red, minimum width=0.15 cm]{};
\node(b9) [right=0cm of b8,mystyle,pattern=north east lines, pattern color=red, minimum width=0.2 cm]{};
\node(b10) [right=2.5cm of b7,mystyle,pattern=north west lines, pattern color=red, minimum width=0.1 cm]{};
\node(b11) [right=2.5cm of b8,mystyle,pattern=horizontal lines, pattern color=red, minimum width=0.15 cm]{};
\node(b12) [right=2.5cm of b9,mystyle,pattern=north east lines, pattern color=red, minimum width=0.2 cm]{};

\node(h5) [right=0.75cm of h4, minimum width=\pi cm]{$\ldots$};
\end{tikzpicture}
\caption{A block $B$ of jobs in $\sigma$.
The blue tasks are $j_1,j_2$ and $j_3$, while the red tasks are $j'_1,j'_2$ and $j'_3$ (in this order).}\label{fig:nphardcmax}
\end{figure}
\end{proof}

\begin{theorem}
$1\|(p_j,L,p_j)\|\Cs$ is strongly $\NP$-hard.
\end{theorem}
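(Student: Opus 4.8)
The plan is to reduce from the makespan variant $1\|(p_j,L,p_j)\|\Cmax$, which we just proved strongly $\NP$-hard in \autoref{thm:NPpjLpjCmax}, reusing the very same gadget (equivalently, one reduces directly from \tpart{}). Given a \tpart{} instance with $q$ even, I keep the $3q$ small jobs with $p_i=e_i$, the $q$ large jobs with $p_i=R$, and the common delay $L=R+E$, now choosing $R$ to be a sufficiently large polynomial in $q$ and $E$ (still polynomially bounded, so strong hardness is preserved). The decisive feature I would lean on is that \emph{every} job, small or large, carries the same huge delay $L=R+E$: a job started at time $t$ cannot finish before $t+2p+R+E$, so its two tasks are forced far apart, and the only way to avoid large idle stretches is to braid the tasks of different jobs into one another's delay windows exactly as in the proof of \autoref{thm:NPpjLpjCmax}. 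I set a threshold $W$ on $\Cs$ and claim the \tpart{} instance is solvable if and only if some feasible schedule satisfies $\Cs\le W$.

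For the forward direction I would take the compact schedule built in the first part of the proof of \autoref{thm:NPpjLpjCmax}: the jobs split into $q/2$ interleaved blocks, each consisting of two large jobs whose length-$E$ gaps are completely filled by the small jobs corresponding to some $Q_j$ and $Q_{j'}$ (see \autoref{fig:nphardcmax}), with the blocks placed consecutively. Since all blocks have the same length $4R+6E$ and identical internal layout, the completion time of each job in the $k^{\textnormal{th}}$ block is $k(4R+6E)$ plus a fixed within-block offset; summing over the eight jobs of all $q/2$ blocks yields a closed form, which I take as $W$. This is a routine (if slightly tedious) computation.

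The converse is the crux and the main obstacle. Unlike $\Cmax$, the objective $\Cs$ has no single threshold tied to the last-finishing job, so a priori a schedule might lower the total by finishing some jobs early at the expense of others; the key difficulty is ruling out that any such trade-off beats the compact schedule. Here the uniform huge delay is what saves me: because the small jobs inherit the same delay $R+E$, they cannot be gathered into a cheap, self-contained early region — each small job's second task is forced $R+E$ into the future — so the small jobs must be woven into the gaps of the interleaved large jobs just as in the compact schedule, leaving essentially no alternative compact layout. I would then show that any schedule with $\Cs\le W$ can contain no structural \emph{defect}: (i) the large jobs must pair up and form consecutive blocks separated by no idle time, since an unpaired large job or an idle separation displaces a $\Theta(R)$-length segment carrying a constant fraction of all jobs and hence raises $\Cs$ by more than the slack in $W$; (ii) the gaps between the paired large jobs must be filled entirely by small jobs, as any unused space pushes small jobs into later blocks and strictly increases $\Cs$; and (iii) because $E/4<e_i<E/2$, each filled gap of length exactly $E$ contains precisely three small jobs, so the filling induces a partition of $Q$ into triples summing to $E$, i.e.\ a solution of the \tpart{} instance. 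The quantitative heart of this argument — fixing $R$ large enough that every defect costs strictly more in $\Cs$ than can ever be recouped by rearranging the small jobs — is where I expect the real work to lie, and it is the reason the makespan bound $R>3qE$ may need to be strengthened to a larger, still polynomial, value of $R$.
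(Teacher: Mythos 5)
There is a genuine gap, and it sits exactly where you locate "the real work": your proposal is a plan for a direct threshold reduction from \tpart{}, but neither direction of that plan is actually carried out, and the forward direction as stated is already incorrect. In the compact schedule of \autoref{thm:NPpjLpjCmax} the blocks do \emph{not} all have length exactly $4R+6E$ with a fixed internal layout: the first tasks of $j_1,j_2,j_3$ are spread out (with gaps) over an interval whose length depends on the individual values $e_{j_1},e_{j_2}$, and the completion times of the small jobs inside a gap are of the form $t+e_{j_1}$, $t+e_{j_1}+e_{j_2}$, $t+E$. Consequently $\Cs$ of the compact schedule depends on which elements form each triple and in which order they are placed, and it varies by $\Theta(qE)$ across different valid partitions. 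There is therefore no single closed-form $W$ that every yes-instance schedule attains; you would have to take $W$ as a worst case over all valid configurations, and then the converse must show that every structural defect costs more than this $\Theta(qE)$ slack \emph{plus} whatever can be saved by finishing some jobs early at the expense of others. That converse is only sketched in your claims (i)--(iii); in particular, nothing in the sketch rules out a schedule that deliberately degrades the last block (where a defect is cheap for $\Cs$, since few jobs come after it) to stay under a generously chosen $W$. Making this quantitative is a substantial piece of work that the proposal does not contain.

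The paper avoids all of this with a different, much cleaner route: having established strong $\NP$-hardness of $1\|(p_j,L,p_j)\|\Cmax$ in \autoref{thm:NPpjLpjCmax}, it reduces \emph{that} problem to $1\|(p_j,L,p_j)\|\Cs$ by appending $M$ additional jobs with $p_j=\sum_{i=1}^n p_i+nL>L$. Since these jobs' tasks are longer than $L$, nothing fits inside their delay windows, and in any optimal schedule they all come after the original jobs; their contribution to $\Cs$ is $M\cdot\Cmax+hM(M+1)/2$, so a gap of $1$ in the makespan of the original instance becomes a gap of $M$ in $\Cs$, which for $M>nC$ swamps every lower-order term. If you want to rescue your approach, the simplest fix is to adopt this amplification step rather than re-proving the combinatorial structure of near-optimal schedules under the $\Cs$ objective.
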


\begin{proof}
We reduce the strongly $\NP$-hard problem $1\|(p_j,L,p_j)\|C_{\max}$, proven to be strongly $\NP$-hard in \autoref{thm:NPpjLpjCmax}, to $1\|(p_j,L,p_j)\|\Cs$.
Consider an instance $I$ of $1\|(p_j,L,p_j)\|C_{\max}$. We define an instance $I'$ of $1\|(p_j,L,p_j)\|\Cs$ as follows.  
There are $n+M$ jobs in $I'$, where $M$ is a sufficiently large number.
\begin{itemize}
\item the first $n$ jobs are the same as the jobs in $I$ (small jobs),
\item for the remaining $M$ jobs, we have $p_j=\sum_{i=1}^n p_i+nL$, $j=n+1,\ldots,n+M$ (large jobs).
\end{itemize}


We prove that there is a solution $\sigma$ for $I$ with makespan at most $C$ if and only if there is a solution for $I'$ with a  total completion time of at most $z:=(n+M)C+h M(M+1)/2$, where $h:= 2\left(\sum_{i=1}^n p_i+nL\right)+L=a_j+L+b_j$ is the time required for scheduling a large job $j$.
The Theorem follows from this statement.

If there is such an $\sigma$, then we define $\sigma'$ (a solution of $I'$) from $\sigma$  as follows.
The small jobs start exactly at the same time as they start in $\sigma$, while the large jobs start right after them as soon as possible (in arbitrary order), see \autoref{fig:nphard}.
Observe that the completion time of any small job in $\sigma'$ is at most $C$, while the total completion time of the large jobs is $MC+h M(M+1)/2$.
Therefore, the total completion time of $\sigma'$ is at most $z$.

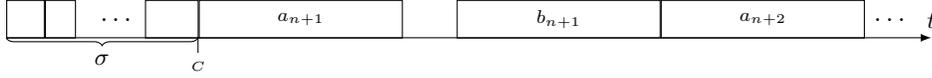
\begin{figure}
\centering
\begin{tikzpicture}
\def\ox{0} 
\def\oy{0} 
\coordinate(o) at (\ox,\oy);

\def\tl{12.3}
\draw [-latex](\ox,\oy) node[above left]{} -- (\ox+\tl,\oy) node[above,font=\small]{$t$};

\def\pi{0.5}
\tikzstyle{mystyle}=[draw, minimum height=0.5cm,rectangle, inner sep=0pt,font=\scriptsize]
\tikzstyle{mystyle}=[draw, minimum height=0.5cm,rectangle, inner sep=0pt,font=\scriptsize]

\draw (2.55,0) -- (2.55,-0.2) node[below] {\tiny $C$};
\draw [decorate,decoration={brace,amplitude=3pt},xshift=0pt,yshift=0pt]
(2.55cm,0cm) -- (0cm,0cm) node [below=0.1cm,black,midway]{\small  $\sigma$};

\node(b1) [above right=-0.01cm and 0cm of o,mystyle, minimum width=\pi cm]{};
\node(b2) [right=0cm of b1,mystyle, minimum width=0.4 cm]{};
\node(d2) [right=0.2cm of b2, minimum width=0.3 cm]{$\dots$};
\node(b6) [right=0cm of d2,mystyle, minimum width=0.7 cm]{  };

\node(h1) [right=0cm of b6,mystyle, minimum width=2.7 cm]{$a_{n+1}$};
\node(d2) [right=0.2cm of h1, minimum width=\pi cm]{};
\node(h2) [right=0cm of d2,mystyle, minimum width=2.7 cm]{$b_{n+1}$};
\node(h3) [right=0cm of h2,mystyle, minimum width=2.7 cm]{$a_{n+2}$};
\node(h4) [right=0cm of h3, minimum width=\pi cm]{$\ldots$};
\end{tikzpicture}
\caption{Schedule $\sigma'$ created from schedule $\sigma$}\label{fig:nphard}
\end{figure}

Now suppose that there is no such solution for $I$, i.e., the makespan $\Cmax$ of any solution $\sigma$ is at least $C+1$.
Consider an arbitrary optimal solution $\hat{\sigma}$ for $I'$.
Observe that no job task can be scheduled between the first and the second task of any large job in $\hat{\sigma}$, since the processing times of both tasks of any large job are larger than~$L$.
Furthermore,  no large job can precede any of the small jobs, otherwise, we would get a better schedule by moving that large job to the end of the schedule (cf., the definition of the processing times of the large jobs).  
Therefore, the large jobs start right after the small jobs, i.e., after $\Cmax$.
Hence, 
\begin{align*}
\sum_{j=1}^{n+M} \hat{C}_j &= \sum_{j=1}^n \hat{C}_j + MC_{\max} + h M(M+1)/2 \\
&>   M(C+1) + h M(M+1)/2>z,
\end{align*}
where the last inequality follows if $M$ is larger than $nC$.
\end{proof}


\begin{theorem}
$1\|(1,L_j,1,\pi_a)\|\Cs$ is strongly $\NP$-hard.
\end{theorem}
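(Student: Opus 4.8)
The plan is to mirror the structure of the previous theorem (the $1\|(p_j,L,p_j)\|\Cs$ reduction) and reduce from the corresponding makespan problem $1\|(1,L_j,1,\pi_a)\|\Cmax$, which is strongly $\NP$-hard by Condotta and Shakhlevich~\cite{condotta12}. Given an instance $I$ of that problem with $n$ jobs, a fixed first-task sequence, delays $L_j$, and a makespan threshold $C$ (all polynomially bounded, since the source problem is strongly $\NP$-hard), I would build an instance $I'$ of $1\|(1,L_j,1,\pi_a)\|\Cs$ by keeping the $n$ original jobs and appending $M$ identical \emph{padding} jobs $p_1,\dots,p_M$, each with unit tasks and common delay $D:=M-1$, whose first tasks are placed at the very end of $\pi_a$ (after all original first tasks). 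Here $M$ is chosen as a polynomial with $M>nC$ and $M>\max_j L_j$. The crucial feature of the choice $D=M-1$ is that $M$ unit jobs sharing this delay \emph{self-pack}: if their first tasks occupy $[t_0,t_0+M]$ contiguously, then $p_k$'s second task lies in $[t_0+k-1+M,\,t_0+k+M]$, so the second tasks fill $[t_0+M,t_0+2M]$ with no internal idle time, making the whole padding a contiguous block of length $2M$. I would set the threshold to $z:=(n+M)C+M^2+M(M+1)/2$ and prove that $I$ admits a schedule with $\Cmax\le C$ if and only if $I'$ admits one with $\Cs\le z$.

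For the forward direction, assume $I$ has a schedule of makespan at most $C$. I schedule the original jobs exactly as in that schedule (inside $[0,C]$) and then place the padding block in $[C,C+2M]$, which is feasible since all original first tasks finish by $C$ and the block does not overlap any original task. Each original job completes by $C$, contributing at most $nC$, and $p_k$ completes at $C+M+k$, so the padding contributes $\sum_{k=1}^{M}(C+M+k)=MC+M^2+M(M+1)/2$. Hence $\Cs\le nC+MC+M^2+M(M+1)/2=z$, as required.

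For the reverse direction I would first record an \emph{unconditional} lower bound on the padding contribution: in any feasible schedule, if $t_0$ denotes the start of the first padding task, then by the fixed order $\pi_a$ together with unit tasks we have $s_k\ge t_0+(k-1)$ for the $k$-th padding first task, so $p_k$ completes at $s_k+M+1\ge t_0+k+M$ and $\sum_{k}C_{p_k}\ge M t_0+M^2+M(M+1)/2$. It then suffices to force $t_0\ge C+1$: every original first task ends by $t_0$, so every original second task occurs at time at most $t_0+\max_j L_j<t_0+2M$, which means no original second task can be pushed past the padding block; consequently, in a cost-minimizing schedule all original activity lies before $t_0$, and since the restriction of $\Salg$ to the original jobs is a feasible schedule of $I$ it has makespan at least $C+1$, forcing $t_0\ge C+1$. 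Plugging in gives $\Cs\ge M(C+1)+M^2+M(M+1)/2=z+(M-nC)>z$ because $M>nC$, so no schedule of $I'$ reaches $z$ when $I$ has no schedule of makespan $\le C$.

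The main obstacle is the step that forces $t_0\ge C+1$, i.e.\ that an optimal schedule places the padding as a contiguous block following a compact original schedule. This is the analogue of the exchange argument in the previous theorem where large jobs are pushed to the end, but here that argument is unavailable because all processing times are fixed to $1$, so I cannot invoke large task lengths to forbid interleaving. Instead I must rule out that slotting original second tasks into the padding region ever lowers the objective. The bound $M>\max_j L_j$ confines any original second task placed after $t_0$ to the interval $(t_0,t_0+\max_j L_j]\subsetneq(t_0,t_0+2M)$, so each such interleaving necessarily opens a gap inside the padding block and thereby delays \emph{all} later padding jobs; carefully quantifying this induced delay and showing it always exceeds the savings from starting the block earlier is the crux of the proof, and is where I would spend most of the technical effort.
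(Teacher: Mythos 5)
Your reduction is from the same source problem as the paper's ($1\|(1,L_j,1,\pi_a)\|\Cmax$, via Condotta and Shakhlevich~\cite{condotta12}), and your forward direction is correct, but the construction is genuinely different and the reverse direction has a real gap at exactly the step you yourself flag as the crux. You place $M$ identical padding jobs with common delay $M-1$ \emph{after} the original first tasks in the fixed order; the paper instead \emph{prepends} $M$ helper jobs with distinct delays $L_{n+j}=C+2(j-1)$, ordered so that their first tasks tile $[0,M]$ and their second tasks tile $[M+C,2M+C]$, leaving a window of length exactly $C$ into which the original jobs must fit. In your layout nothing prevents an adversarial schedule from starting the padding block early and slotting original second tasks into the padding's first-task region $[t_0,t_0+M]$; your sentence ``consequently, in a cost-minimizing schedule all original activity lies before $t_0$'' does not follow from the preceding observation, and your closing paragraph concedes that the needed exchange argument is not carried out. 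Moreover, the trade-off is not obviously in your favor: freeing one slot before $t_0$ lowers $t_0$ by one and saves about $M$ in the padding contribution, while the displacement penalty caused by the interleaved task is only about $M-i$ with offset $i\le\max_j L_j\le C$, so each interleaved task can yield a net saving of up to $C$, i.e.\ up to $nC$ overall --- the same order as your slack $M-nC$. Closing this requires a global accounting over the set of interleaved tasks together with the makespan of the induced sub-schedule of $I$ (and very likely a larger choice of $M$), none of which is present.

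The paper's design avoids this issue structurally: because the helper first tasks come first and their delays drop by $2$ per position in $\pi'_a$, the helper second tasks form a rigid block ending at $2M+C$; the paper then shows that (i) any original job completing after $b_{n+M}$ already costs more than $z$, (ii) if the helper first tasks are contiguous, the originals are confined to $[M,M+C]$, contradicting $\Cmax\ge C+1$, and (iii) any gap among the helper first tasks cascades through the order $\pi'_a$ and forces $b_{n+1}$ (and hence $M-1$ helper completions) past $2M+C$, again exceeding $z$. If you want to keep your padding at the end, you must either prove your exchange argument with explicit constants or redesign the padding so that original second tasks cannot enter it at all.
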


\begin{proof}
We reduce the known strongly $\NP$-hard problem $1\|(1, L_j, 1, \pi_a)\|\Cmax$, proven to be strongly $\NP$-hard by Condotta and Shakhlevich~\cite{condotta12}, to $1\|(1, L_j, 1, \pi_a)\|\Cs$. Recall that $\pi_a$ fixes a scheduling sequence for the first tasks of all jobs. 
Consider an instance $I$ of $1\|(1, L_j, 1, \pi_a)\|\Cmax$.
We want to know if $I$ has a solution with $\Cmax \leq C$.
We define an instance $I'$ of $1\|(1, L_j, 1, \pi_a)\|\Cs$ as follows.  
There are $n+M$ jobs in $I'$, where $M$ is a sufficiently large number.
\begin{itemize}
\item the first $n$ jobs are the same as the jobs in $I$ (original jobs),
\item for the remaining $M$ jobs, we have $L_j = C + 2(j-n-1)$, $j=n+1,\ldots,n+M$ (helper jobs).
\end{itemize}
The fixed sequence of the first tasks in $I'$ is defined as $\pi'_a$, with
\begin{itemize}
\item $\pi'_a := (n+M, n+M-1, \dots, n+1, \{\pi_a\})$.
\end{itemize}

We prove that there is a solution $\sigma$ for $I$ with makespan at most $C$ if and only if there is a solution for $I'$ with a  total completion time of at most 
\begin{align*}
z:=n (M  + C) + M^2 + \frac{M(M+1)}{2} + MC.
\end{align*}

If there is such an $\sigma$, then we define $\sigma'$ (a solution of $I'$) from $\sigma$ as follows.
The helper jobs are scheduled in decreasing order of their indices one after another as soon as possible, while the original jobs are scheduled in the exact same way as in $\sigma$, but with their starting time increased by $M$ each. 
See the first schedule of \autoref{fig:nphard_1Lj1} for an illustration.
This schedule respects the order given by $\pi'_a$.
Observe that the completion time of any original job in $\sigma'$ is at most $M + C$, while the total completion time of the helper jobs is exactly $M^2 + \frac{M(M+1)}{2} + MC$.
Therefore, the total completion time of $\sigma'$ is at most $z$.

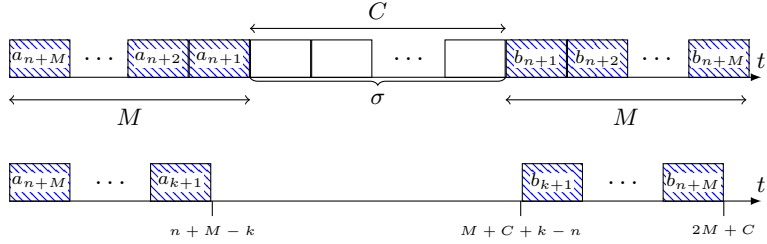
\begin{figure}
\centering
\begin{tikzpicture}
\def\ox{0} 
\def\oy{0} 
\coordinate(o) at (\ox,\oy);

\def\tl{10.0}
\draw [-latex](\ox,\oy) node[above left]{} -- (\ox+\tl,\oy) node[above,font=\small]{$t$};

\def\ll{0.5}

\def\pi{0.5}
\tikzstyle{mystyle}=[draw, minimum height=0.5cm,rectangle, inner sep=0pt,font=\scriptsize,minimum width=0.8 cm]
\tikzstyle{mystyle2}=[draw = none, minimum height=0.25cm,rectangle, inner sep=0pt,font=\scriptsize]


\node(h0) [above right=-0.01cm and 0cm of o,mystyle,pattern=north west lines, pattern color=blue]{};
\node(h0_t) [mystyle2, fill = white] at (h0.center) {$a_{n+M}$};
\node(b1) [above right=-0.01cm and 3.2cm of o,mystyle]{};
\node(b2) [right=0cm of b1,mystyle]{};
\node(b3) [right=0.15cm of b2]{$\ldots$};
\node(b4) [right=0.15cm of b3,mystyle]{};
\node(h1) [left=0cm of b1,mystyle,pattern=north west lines, pattern color=blue]{};
\node(h1_t) [mystyle2, fill = white] at (h1.center) {$a_{n+1}$};
\node(h3) [left=0 cm of h1,mystyle,pattern=north west lines, pattern color=blue]{};
\node(h3_t) [mystyle2, fill = white] at (h3.center) {$a_{n+2}$};
\node(h2) [right=0cm of b4,mystyle,pattern=north west lines, pattern color=blue]{};
\node(h2_t) [mystyle2, fill = white] at (h2.center) {$b_{n+1}$};
\node(h4) [right=0 cm of h2,mystyle,pattern=north west lines, pattern color=blue]{};
\node(h4_t) [mystyle2, fill = white] at (h4.center) {$b_{n+2}$};
\node(h7) [right=0.8 cm of h4,mystyle,pattern=north west lines, pattern color=blue]{};
\node(h7_t) [mystyle2, fill = white] at (h7.center) {$b_{n+M}$};
\node(d1) [right=0.05cm of h0]{$\dots$};
\node(d2) [right=0.05cm of h4]{$\dots$};

\draw [<->] (3.2,0.65)--node[above]{\small $C$}(6.6,0.65); 
\draw [<->] (0,-0.3)--node[below]{\small $M$}(3.2,-0.3);
\draw [<->] (9.8,-0.3)--node[below]{\small $M$}(6.6,-0.3);

\draw [decorate,decoration={brace,amplitude=3pt},xshift=0pt,yshift=0pt]
(6.6,0) -- (3.2,0) node [below=0.1cm,black,midway]{\small $\sigma$};
\end{tikzpicture}

\vspace{10pt}
\begin{tikzpicture}
\def\ox{0} 
\def\oy{0} 
\coordinate(o) at (\ox,\oy);

\def\tl{10.0}
\draw [-latex](\ox,\oy) node[above left]{} -- (\ox+\tl,\oy) node[above,font=\small]{$t$};

\def\ll{0.5}

\def\pi{0.5}
\tikzstyle{mystyle}=[draw, minimum height=0.5cm,rectangle, inner sep=0pt,font=\scriptsize,minimum width=0.8 cm]
\tikzstyle{mystyle2}=[draw = none, minimum height=0.25cm,rectangle, inner sep=0pt,font=\scriptsize]

\draw (2.7,0) -- (2.7,-0.2) node[below] {\tiny $n+M-k$};
\draw (6.8,0) -- (6.8,-0.2) node[below] {\tiny $M+C+k-n$};
\draw (9.5,0) -- (9.5,-0.2) node[below] {\tiny $2M+C$};

\node(h0) [above right=-0.01cm and 0cm of o,mystyle,pattern=north west lines, pattern color=blue]{};
\node(h0_t) [mystyle2, fill = white] at (h0.center) {$a_{n+M}$};
\node(b3) [right=0.2cm of h0]{$\ldots$};
\node(h1) [right=0.2cm of b3,mystyle,pattern=north west lines, pattern color=blue]{};
\node(h1_t) [mystyle2, fill = white] at (h1.center) {$a_{k+1}$};
\node(h2) [right=2.8cm of b1,mystyle,pattern=north west lines, pattern color=blue]{};
\node(h2_t) [mystyle2, fill = white] at (h2.center) {$b_{k+1}$};
\node(h4) [right=0.2 cm of h2]{$\ldots$};
\node(h6) [right=0.2 cm of h4,mystyle,pattern=north west lines, pattern color=blue]{};
\node(h6_t) [mystyle2, fill = white] at (h6.center) {$b_{n+M}$};


\end{tikzpicture}
\caption{Above: schedule $\sigma'$ created from schedule $\sigma$. 
The original jobs are white, the helper jobs are blue.
Below: jobs $k+1,\ldots,n+M$ in schedule $\hat{\sigma}$.}\label{fig:nphard_1Lj1}

\end{figure}

Now suppose that there is no such solution for $I$, i.e., the makespan $\Cmax$ of any solution $\sigma$ is at least $C+1$.
Consider an arbitrary optimal solution $\hat{\sigma}$ for $I'$.
Suppose for the sake of contradiction that the total completion time of $\hat{\sigma}$ is at most $z$ for any~$M$.

Due to the fixed order of the first tasks $\pi'_a$ we can  assume that $a_{n+M}$ starts at $0$ in $\hat{\sigma}$.
Observe that the total completion time of the helper jobs is at least $M^2 + \frac{M(M+1)}{2}+MC$.
The original jobs start after the first tasks of the helper jobs, i.e., after $M$, thus their total completion time is at least $nM$. 
If any of the original jobs completes after $b_{n+M}$ in $\hat{\sigma}$, then its completion time is larger than $2M$, and the total completion time of the jobs in $\hat{\sigma}$ is larger than $M^2 + \frac{M(M+1)}{2}+MC+(n+1)M>z$, if $M>nC$. 
Thus, each original job has to start before $b_{n+M}$.  

If there is no gap among the first tasks of the helper jobs in $\hat{\sigma}$, then the machine is busy with the helper jobs in $[0,M]$ and in $[M+C,2M+C]$. 
Since each original job has to be completed before $b_{n+M}$ (i.e., before $2M+C$), these jobs have to be scheduled in $[M,M+C]$.
However, this is a contradiction to the the makespan of $\sigma$.

Therefore, in the following we suppose there is a gap between some first tasks of the helper jobs in $\hat{\sigma}$.
Let $k$, $n+1 \leq k \leq n+M-1$, be the largest index such that there is a gap before $a_k$.
Then, the machine is busy with jobs $n+M, n+M-1,\ldots, k+1$ in $[0,n+M-k]$ and in $[M+C+k-n,2M+C]$, see the second schedule in \autoref{fig:nphard_1Lj1}.
Since $a_k$ starts later than $n+M-k$, $b_k$ starts later than $M+C+k-n$, which means it starts later than $2M+C$ to avoid intersection.

Let $j$ be a job in $\{n+2, \dots, k\}$ whose second task $b_j$ is scheduled after $b_{n+M}$.
Since $L_{j-1}= L_j-2$, and $a_{j-1}$ has to be scheduled after $a_j$ according to $\pi'_a$, $b_{j-1}$ is scheduled either immediately before, or at some time after $b_j$.
In both cases, $b_{j-1}$ has to be scheduled after $b_{n+M}$ to avoid intersection with $b_{n+M}$.
This particularly implies that $b_{n+1}$ is scheduled after $b_{n+M}$, i.e., $C_{n+1}\geq C_{n+M}+1=2M+C+1$.

%
The total completion time of jobs $n+2,\ldots,n+M$ is then at least $(M+C+2)+(M+C+3)+\ldots+(2M+C)=(M-1)(M+C+2)+(M-1)(M-2)/2$.
The total completion time of the original jobs is at least $nM$. Thus 
we have the following lower bound on the total completion time of $\hat{\sigma}$: 
\begin{align*}
&(2M+C+1)+\left((M-1)(M+C+2)+\frac{(M-1)(M-2)}{2}\right)+(nM)\\
&=M^2+M+CM+\frac{M^2+M}{2}+nM>z,
\end{align*}
if $M > nC$, proving the Theorem.
\end{proof}

\section{Approximation results}
\label{sec:main}

As in the following, we only look at CTP with the objective of minimizing the total completion times, we call this problem only "CTP" from now on for simplicity reasons.
In this section we give polynomial-time approximation algorithms for a number of CTP variants.
All of these variants are proven to be $\NP$-hard either by Chen and Zhang~\cite{chen2021}, by Kubiak~\cite{kubiak22}, or by the results of \autoref{sec:complex}. 
We start this section with two useful lemmas that provide lower bounds on the objective value of any optimal solution for the general CTP.
\autoref{subsec:fixp} and \autoref{subsec:fixL} consider variants with fixed processing times and fixed delay times, respectively.
\autoref{subsec:relatedpL} examines variants where there exists some relation between the processing times and the delay time of each job.

\subsection{Lower bounds on the optimum}
\label{sec:lbopt}

Recall the definitions $\Coptf_j$ and $\Copts_j$ as the completion time of the $j^{\textnormal{th}}$ finishing and $j^{\textnormal{th}}$ starting job of some optimal schedule, respectively.
The next lemma is straightforward from the definition of $\Coptf_j$, since there are $j$ jobs that complete until $\Coptf_j$ in an optimal schedule.

\begin{lemma}
\label{lemma:lbfinish}
Let the jobs of a CTP instance be indexed in non-decreasing $a_i + b_i$ order. 
Then, for any optimal schedule for this instance, we have 
\begin{enumerate}
\item $\Coptf_{j} \geq \sum_{i=1}^{j} (a_{i}+b_{i})$, $j=1,\ldots,n$ and
\item $\Copt \geq \sum_{j=1}^n \sum_{i=1}^{j} (a_i+b_i)$.
\end{enumerate}
\end{lemma}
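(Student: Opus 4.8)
The plan is to establish both parts by exploiting the fact that in any feasible schedule, the $j^{\textnormal{th}}$ finishing job has at least $j$ jobs completed by its completion time, and then use a rearrangement argument to obtain the stated lower bound in terms of the smallest total processing times.

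First I would prove part 1. Fix an optimal schedule and consider the $j^{\textnormal{th}}$ finishing job; call it $\ell$. By definition, there are exactly $j$ jobs (including $\ell$ itself) whose second tasks complete no later than $\Coptf_j = C_\ell$. Each such job $k$ occupies a total of $a_k + b_k$ units of machine processing time, and all of this processing must take place within the interval $[0, C_\ell]$, since both tasks of each of these $j$ jobs complete by time $C_\ell$ (the first task always precedes the second). Because the single machine can process at most one task at a time and cannot process fractional overlap, the total machine-busy time in $[0,C_\ell]$ is at least the sum of the total processing times of these $j$ jobs, giving $\Coptf_j = C_\ell \geq \sum_{k \in S} (a_k + b_k)$, where $S$ is the set of these $j$ jobs. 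Now, since the jobs are indexed in non-decreasing $a_i + b_i$ order, the sum over \emph{any} set of $j$ jobs is at least the sum over the $j$ jobs with the smallest total processing times, namely $\sum_{i=1}^{j}(a_i + b_i)$. This yields the claimed bound $\Coptf_j \geq \sum_{i=1}^j (a_i + b_i)$.

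Part 2 then follows immediately by summation. Using the observation recorded in the excerpt that $\Copt = \sum_{j=1}^n \Coptf_j$, I would simply add the inequality from part 1 over all $j = 1, \dots, n$:
\begin{equation*}
\Copt = \sum_{j=1}^n \Coptf_j \geq \sum_{j=1}^n \sum_{i=1}^{j} (a_i + b_i).
\end{equation*}

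I do not anticipate a serious obstacle here; the argument is essentially the classic lower bound for $1\|\|\sum C_j$ adapted to the coupled-task setting, and the coupled structure only matters insofar as both tasks of an early-finishing job must lie entirely before its completion time. The one point requiring a little care is justifying that all $a_k + b_k$ of the machine load for each of the $j$ early-finishing jobs genuinely falls within $[0, C_\ell]$: this is clear because a job's first task always precedes its second task, so if the second task completes by $C_\ell$ then both tasks do. Given that, the volume (total-work) bound and the exchange argument using the sorted indexing are routine.
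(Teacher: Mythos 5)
Your argument is correct and matches the paper's reasoning exactly: the paper justifies this lemma in one sentence by noting that $j$ jobs complete by $\Coptf_j$ in an optimal schedule, which is precisely the machine-load argument you spell out (all $j$ early-finishing jobs contribute their full $a_k+b_k$ to $[0,\Coptf_j]$, and the sorted indexing minimizes that sum). Your elaboration fills in the details the paper leaves implicit, but the approach is the same.
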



The second lemma is analogous, and follows from the observation that there are $j-1$ first tasks that finish until the first task of the job corresponding to $\Copts_j$ starts in some optimal schedule.
See \autoref{fig:boundinglemma} for an illustration.

\begin{lemma}
\label{lemma:lbstart}
Let the jobs of a CTP instance be indexed in non-decreasing $a_i$ order.
Then, for any optimal schedule for this instance, where $j'$ is the $j^{\textnormal{th}}$ starting job, we have
\begin{enumerate}
\item $\Copts_j\geq \sum_{i=1}^{j} a_i + L_{j'} + b_{j'}$, $j=1,\ldots,n$ and
\item $\Copt \geq \sum_{j=1}^n \sum_{i=1}^{j} a_i + \sum_{j=1}^n L_j + \sum_{j=1}^n b_j$.
\end{enumerate}
\end{lemma}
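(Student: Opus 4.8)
The plan is to mirror the argument behind \autoref{lemma:lbfinish}, but now tracking the \emph{start} of the $j^{\textnormal{th}}$ starting job rather than the completion of the $j^{\textnormal{th}}$ finishing job. Fix an optimal schedule and let $j'$ be its $j^{\textnormal{th}}$ starting job, so that $\Copts_j = C_{j'} = S_{j'} + a_{j'} + L_{j'} + b_{j'}$ by the definition of the completion time (first task, then delay $L_{j'}$, then second task). The two summands $L_{j'} + b_{j'}$ already appear verbatim in the claimed bound, so the whole of part~1 reduces to showing $S_{j'} + a_{j'} \geq \sum_{i=1}^{j} a_i$.

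First I would exploit the single-machine, non-overlapping structure. By the definition of the $j^{\textnormal{th}}$ starting job, exactly $j-1$ first tasks start before $a_{j'}$; since no two tasks may overlap on the single machine, each of these $j-1$ first tasks must complete no later than $S_{j'}$ (a task still running at $S_{j'}$ would collide with $a_{j'}$). Hence, together with $a_{j'}$ itself, there are $j$ distinct first tasks processed entirely within the window $[0, S_{j'} + a_{j'}]$, which yields $S_{j'} + a_{j'} \geq \sum_{i \in F} a_i$, where $F$ is the set of the first $j$ jobs to start.

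The key step is then a rearrangement observation: because the jobs are indexed in non-decreasing $a_i$ order, the sum of first-task processing times over \emph{any} $j$-element set $F$ is at least the sum over the $j$ smallest such values, namely $\sum_{i=1}^{j} a_i$. Combining this with the previous inequality gives $S_{j'} + a_{j'} \geq \sum_{i=1}^{j} a_i$, and adding $L_{j'} + b_{j'}$ establishes part~1.

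For part~2, I would sum part~1 over $j = 1, \ldots, n$ and use $\Copt = \sum_{j=1}^{n} \Copts_j$. As $j$ ranges over $1, \ldots, n$, the index $j'$ of the $j^{\textnormal{th}}$ starting job ranges over all jobs exactly once, so $\sum_{j} L_{j'} = \sum_{j} L_j$ and $\sum_{j} b_{j'} = \sum_{j} b_j$, while the remaining contribution is $\sum_{j=1}^{n} \sum_{i=1}^{j} a_i$, giving the stated bound. The only genuinely delicate point is the second step---justifying that the $j-1$ earlier-starting first tasks are fully completed by $S_{j'}$, and hence that their processing times really do accumulate before $a_{j'}$---which rests essentially on the no-overlap property of a single machine; everything else is routine bookkeeping and a standard rearrangement estimate.
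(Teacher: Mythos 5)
Your proof is correct and follows essentially the same route as the paper, which justifies the bound by exactly the observation you elaborate: the $j-1$ first tasks starting before $a_{j'}$ must all complete by $S_{j'}$ on the single machine, so $S_{j'}+a_{j'}\geq\sum_{i=1}^{j}a_i$ after the rearrangement step, and part~2 follows by summing. The paper leaves these details implicit (it states the observation and points to a figure), so your write-up is just a fuller version of the same argument.
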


\begin{figure}
\centering

\begin{tikzpicture}

\def\ox{0} 
\def\oy{0} 
\coordinate(o) at (\ox,\oy);

\def\tl{9.0}
\draw [-latex](\ox,\oy) node[above left]{} -- (\ox+\tl,\oy) node[above,font=\small]{$t$};


\def\pi{0.5}
\tikzstyle{mystyle}=[draw, minimum height=0.5cm,rectangle, inner sep=0pt,font=\scriptsize]
\tikzstyle{mystyle}=[draw, minimum height=0.5cm,rectangle, inner sep=0pt,font=\scriptsize]

\node(b1) [above right=-0.01cm and -0.01cm of o,mystyle, minimum width=0.5 cm]{};
\node(b2) [right=0cm of b1,mystyle, minimum width=0.5 cm]{};
\node(b2) [right=0.5cm of b2,mystyle, minimum width=0.5 cm]{};
\node(b3) [right=0.8cm of b2]{$\ldots$};
\node(b5) [right=2.15cm of b2,mystyle, minimum width=0.8 cm]{$a_{j'}$};
\node(b6) [right=2.5cm of b5, mystyle, minimum width=0.5 cm]{$b_{j'}$};

\draw [<->] (0,0.65)--node[above]{\small $\geq\sum_{i=1}^j a_i$}(5,0.65);
\draw [<->] (5,0.65)--node[above]{\small $L_{j'}$}(7.5,0.65);
\draw (8,0) -- (8,-0.2) node[below] {\tiny $\Copts_j$};
\end{tikzpicture}
\caption{Illustration of the bound on $\Copts_j$.}\label{fig:boundinglemma}
\end{figure}
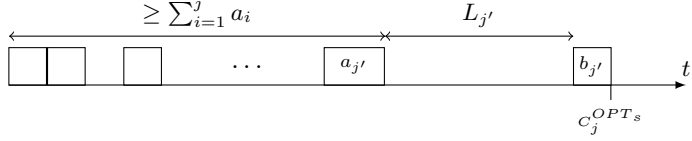

\subsection{CTP with fixed processing times}\label{subsec:fixp}
In this section we assume $a_j=a$ and $b_j=b$ for each job $j\in\J$.
Consider \autoref{algo:A}.

\begin{algorithm}[htp!]
\caption{}
\label{algo:A}
\textbf{Input}:  a CTP instance with $a_j = a$ and $b_j = b$ for each job $j$

\textbf{Output}: a schedule $\sigma$ for this instance

\begin{enumerate}
    \item Sort the jobs in non-decreasing $L_j$ order.  In the following, let the jobs be indexed in this order.\label{step:A_sort_Lj}
    \item Schedule the jobs one-by-one as soon as possible without intersection.\label{step:A_scheduled}
\end{enumerate}
\end{algorithm}

Recall that here, $\Salg$ denotes the schedule created \autoref{algo:A} and $S_j$ is the starting time of job $j$.
The precise meaning of 'as soon as possible' is the following: for a given (partial) schedule $\sigma^p$ and a job $j$, scheduling~$j$ as soon as possible means setting it's starting time $S_j:=\min\{t\geq 0: \text{ the machine is idle both in } [t,t+a_j] \text{ and in } [t+a_j+L_j, t+a_j+L_j+b_j]\}$.



\begin{lemma}\label{obs:algA}
Suppose that $a_j=a$, $b_j=b$ ($j\in\J$) and $b\leq a$.
When \autoref{algo:A} schedules job $j$, where $j$ is the $j^{\textnormal{th}}$ job in non-decreasing $L_j$ order, then $a_j$ starts from the earliest time point $t'$, such that the machine is idle in $[t',t'+a]$. 
\end{lemma}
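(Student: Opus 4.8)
The plan is to argue by induction on $j$ that when \autoref{algo:A} places the $j^{\textnormal{th}}$ job (in the non-decreasing $L_j$ order), its first task is put at the earliest time point $t'$ with $[t',t'+a]$ idle, and that at this very position the slot $[t'+a+L_j,\,t'+a+L_j+b]$ needed by the second task is automatically free. Once both facts are available, the feasibility of starting $a_j$ at $t'$ together with the minimality of $t'$ for the first task forces the algorithm to start $a_j$ exactly at $t'$, which is the claim. The base case $j=1$ is immediate: the machine is empty, so $t'=0$ accommodates both tasks.

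For the inductive step I would first exploit the monotonicity of the construction: \autoref{algo:A} only ever adds tasks, so if $[t',t'+a]$ is idle in the partial schedule of jobs $1,\dots,j-1$, then it was idle in every earlier partial schedule as well. Combining this with the induction hypothesis applied to each earlier job $i<j$ -- which says $a_i$ was placed at the earliest idle $a$-slot available at the moment it was scheduled -- yields $S_i \le t'$. Since $[t',t'+a]$ is still idle in the current partial schedule, the task $[S_i,S_i+a]$ cannot intersect it, and with $S_i\le t'$ this sharpens to $S_i + a \le t'$. Hence every first task of an earlier job completes by $t'$.

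The heart of the argument is then checking that the second-task slot $[t'+a+L_j,\,t'+a+L_j+b]$ is not blocked. No first task can reach it, since they all finish by $t' < t'+a+L_j$. For the second tasks I would use that the jobs are sorted by delay, so $L_i \le L_j$ for $i<j$, together with $S_i + a \le t'$: the second task of job $i$ ends at $S_i + a + L_i + b \le t' + L_j + b$, and the hypothesis $b\le a$ makes this at most $t'+L_j+a = t'+a+L_j$, the left endpoint of the slot. Thus no earlier task intersects the slot, it is idle, and $a_j$ may legally start at $t'$; since every $t<t'$ already fails the first-task condition, $t'$ is exactly where \autoref{algo:A} places $a_j$.

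I expect the main obstacle to be the step that confines the first tasks of earlier jobs to $[0,t']$. One cannot deduce this directly from the "as soon as possible" rule, because a priori an earlier job could have been pushed to the right by its own second-task constraint rather than by first-task occupancy; it is precisely the induction hypothesis -- that for $b\le a$ the second task never constrains the placement -- that rules this out, which is why the inductive framing is essential rather than cosmetic. The remaining inequalities are routine once $b\le a$ and the $L$-ordering are in hand.
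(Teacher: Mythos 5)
Your proof is correct and follows essentially the same route as the paper's: induction on the position in the $L_j$-order, showing that all earlier first tasks complete by $t'$ and then using $L_i\le L_j$ together with $b\le a$ to bound the completion of every earlier second task by $t'+L_j+a=t'+a+L_j$, the left endpoint of the slot needed by $b_j$. The only cosmetic difference is that you make the monotonicity step ($S_i+a\le t'$) fully explicit where the paper states it more tersely.
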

\begin{proof}
We prove the statement by an induction on $j$ in the order given by step~\ref{step:A_sort_Lj} in \autoref{algo:A}; it is trivial for $j=1$. 
Suppose that the statement is true for some $j$, and consider the partial schedule $\sigma^p$ created by the algorithm before scheduling job $j+1$.
Let $t$ be the earliest time point such that the machine is either idle in $[t',t'+a]$ in $\sigma^p$ (it is possible that  $t'$ is the time point when $\sigma^p$ finishes).
We prove the lemma by showing $S_{j+1} = t$.
Due to the definition of $t'$, $a_{j+1}$ cannot intersect with any task in $\sigma^p$. We need to prove the same for $b_{j+1}$, which is scheduled in $[t'+a+L_{j+1},t'+a+L_{j+1}+b]$.

First, observe that $a_i$ starts before $a_{i+1}$ for each $i\leq j$ due to the induction.
This means $a_1,\ldots,a_j$ complete before $t'$.
Let $L_1, \dots, L_n$ be the delay times of the jobs obtained in step~\ref{step:A_sort_Lj} of \autoref{algo:A}.
Since $L_1,\ldots,L_j\leq L_{j+1}$, we have $C_1,\ldots,C_j\leq t'+L_{j+1}+b\leq t'+L_{j+1}+a$, where the last inequality follows from $b\leq a$.
Thus, as each task in $\sigma^p$ completes before the start of $b_{j+1}$, there is no intersection of any scheduled tasks. 
\end{proof}

\begin{lemma}\label{lemma:algAtime}
\autoref{algo:A} runs in $\bigO(n \log n)$ time and it always produces a feasible solution.
\end{lemma}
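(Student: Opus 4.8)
The statement splits into a correctness claim (feasibility) and a complexity claim (running time), and I would treat them separately, disposing of feasibility first, since it follows essentially by construction. For any job $j$ and any partial schedule $\sigma^p$ built so far, the set over which the ``as soon as possible'' rule minimises is nonempty: taking $t$ to be the makespan of $\sigma^p$ places both $[t,t+a]$ and $[t+a+L_j,t+a+L_j+b]$ entirely after every already-scheduled task, so both intervals are idle. Hence $S_j$ is well defined. By the very definition of the rule, the two tasks of $j$ are placed into idle intervals, so they overlap no previously scheduled task, and they are separated by exactly $L_j$; moreover, the algorithm never moves a job once it is placed. A straightforward induction over step~\ref{step:A_scheduled} then shows that after every iteration the partial schedule is feasible, and feasibility of the final $\Salg$ follows.

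For the running time, step~\ref{step:A_sort_Lj} costs $\bigO(n\log n)$, so it suffices to implement step~\ref{step:A_scheduled} within the same bound, i.e., to answer each ``as soon as possible'' query and update the schedule in $\bigO(\log n)$ amortised time. The plan is to represent the current partial schedule by a balanced search tree storing its busy intervals (equivalently, its gaps) ordered by time, augmented with subtree information such as the maximal gap length, so that the query ``leftmost idle interval of a prescribed length at or after a given time'' can be resolved by a standard root-to-leaf descent in $\bigO(\log n)$, and each task insertion also costs $\bigO(\log n)$.

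The crux, and the step I expect to be the main obstacle, is that the required query is not a single-interval fit but a \emph{coupled} one: we must find the smallest $t\ge 0$ for which both $[t,t+a]$ and the shifted interval $[t+a+L_j,t+a+L_j+b]$ are idle simultaneously. To tame this I would exploit the structural regularity established in \autoref{obs:algA}. In the regime treated there ($b\le a$), the first tasks are scheduled strictly left to right and, as its proof shows (via $C_i \le t'+L_{j+1}+b < t'+a+L_{j+1}+b$), the second tasks complete in the same monotone order; consequently placing $a_j$ reduces to locating the leftmost gap of length $a$ at an advancing frontier, after which $b_j$ is guaranteed to fit at the fixed offset $t+a+L_j$ without any further search. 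Because this frontier is monotone, a sweep touches each busy interval only a constant number of times while every task insertion into the augmented tree costs $\bigO(\log n)$, so step~\ref{step:A_scheduled} runs in $\bigO(n\log n)$ overall, dominated by the sort. For the remaining configurations the coupled query no longer collapses to a single frontier, since each already-scheduled busy interval $[\ell,r]$ forbids $t$ in a constant number of $L_j$-dependent elementary ranges; the remaining technical work is to argue that the smallest admissible $t$ can still be extracted with $\bigO(\log n)$ augmented-tree operations per job, keeping the total at $\bigO(n\log n)$.
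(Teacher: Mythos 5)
Your feasibility argument is fine and matches the paper's, which simply declares feasibility ``straightforward from the definition of `as soon as possible'\,''; your observation that the minimum is taken over a nonempty set (the makespan of $\sigma^p$ is always a candidate) is a welcome extra sentence. The running-time half, however, both overshoots and undershoots the paper's proof. The paper does not use any balanced or augmented search tree: it argues via \autoref{obs:algA} that when job $j$ is scheduled, $a_j$ goes into the first gap of length at least $a$ \emph{after the first task of the previously scheduled job}, so the gaps are scanned left to right with a monotone frontier, each gap is inspected only a constant number of times, and the whole of step~\ref{step:A_scheduled} costs $\bigO(n)$; the $\bigO(n\log n)$ bound then comes entirely from the sort in step~\ref{step:A_sort_Lj}. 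Your tree machinery is unnecessary in exactly the regime where you manage to make it work.

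The genuine gap is the one you flag yourself at the end: for the configurations where $b\le a$ fails, you leave it as ``remaining technical work'' to show that the coupled query can still be answered in $\bigO(\log n)$ per job, so your proof of the lemma as stated is incomplete. You should be aware that the difficulty is real: when $a\le b$, a position $t<S_j$ that was infeasible for job $j$ only because of its \emph{second} interval can become feasible for job $j+1$ with its larger delay, so the first-task starting times need not advance monotonically and the single-frontier sweep does not obviously survive. The paper's own proof is terse here as well (it cites only \autoref{obs:algA}, whose hypothesis is $b\le a$); the structural facts it implicitly relies on for the complementary case are those of \autoref{lem:aLjb} (every new task abuts a previously scheduled one, at most $j-1$ gaps each of length at most $b$ before $S_j$), which bound the number of candidate positions that ever need to be examined. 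To close your argument you would either need to prove an analogue of the monotone-frontier property in the $a\le b$ regime, or bound the total number of candidate placements tested over all jobs by $\bigO(n)$ using \autoref{lem:aLjb}; as written, the claim for the general input is asserted rather than proved.
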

\begin{proof}
Sorting the jobs requires $\bigO(n\log n)$ time.
When \autoref{algo:A} schedules job $j$, it searches the first gap after the first task of the directly previous scheduled job with a length of at least $a$ (\autoref{obs:algA}).
This means the length of the gap after each task is only checked once during the whole procedure, which requires $\bigO(n)$ time in total. 
The feasibility of the schedule is straightforward from the definition of 'as soon as possible'. 
\end{proof}

\begin{theorem}
\label{lemma:aLjbsmallerb}
\autoref{algo:A} is a factor-$2$ approximation for $1\|(a, L_j, b, b \leq a)\|\Cs$.
\end{theorem}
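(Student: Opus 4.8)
The plan is to bound $\Calg$ by the \emph{sum} of the two lower bounds supplied by \autoref{lemma:lbfinish} and \autoref{lemma:lbstart}. Since all jobs share $a_i=a$ and $a_i+b_i=a+b$, both orderings required by those lemmas are arbitrary, and the bounds specialize to $\Copt \ge (a+b)\frac{n(n+1)}{2}$ and $\Copt \ge a\frac{n(n+1)}{2} + \sum_{j=1}^n L_j + nb$. As each of these is at most $\Copt$, it suffices to show that $\Calg$ does not exceed their sum, namely $(2a+b)\frac{n(n+1)}{2} + \sum_{j=1}^n L_j + nb$, and the factor-$2$ bound follows.

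To estimate $\Calg$, I would index the jobs in the non-decreasing $L_j$ order of \autoref{algo:A}. By \autoref{obs:algA} (which uses $b\le a$), each first task $a_j$ occupies the earliest length-$a$ idle slot $[S_j,S_j+a]$, so the start times satisfy $S_1<\cdots<S_n$ and $C_j=S_j+a+L_j+b$. Writing $S_j=(j-1)a+B_j+I_j$, where $B_j$ and $I_j$ denote the total processing time of second tasks and the total idle time before $S_j$ respectively, we have the trivial bound $B_j\le (j-1)b$. Granting the estimate $I_j\le (j-1)a$ discussed below, summation yields $\Calg \le (2a+b)\frac{n(n-1)}{2} + n(a+b) + \sum_{j=1}^n L_j$, and an elementary comparison shows this falls short of the target sum of the two lower bounds by exactly $(a+b)n$, which closes the argument.

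The crux, and the step I expect to be the main obstacle, is the idle estimate $I_j\le (j-1)a$. I would derive it from the structural claim that \emph{no first task is directly preceded by idle}. Suppose some maximal idle interval $[u,S_k]$ ended exactly at the start of a first task $a_k$. Its length is less than $a$, since otherwise $[u,u+a]$ would be idle and \autoref{obs:algA} would place $a_k$ at $u$ or earlier. But then $[S_k,u+a]\subset[S_k,S_k+a]$, which in the final schedule is occupied only by $a_k$, carried no task at the moment $a_k$ was scheduled; hence the entire window $[u,u+a]$ was idle at that moment, again forcing $a_k$ to start no later than $u$, a contradiction. Consequently every maximal idle interval before $S_j$ is immediately followed by a second task. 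Distinct idle intervals are followed by distinct second tasks, at most $j-1$ of which start before $S_j$, and each such interval has length less than $a$; therefore $I_j\le (j-1)a$, as required.
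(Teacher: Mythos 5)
Your proof is correct and follows essentially the same route as the paper's: it uses \autoref{obs:algA} to show no first task is preceded by idle time, counts at most $j-1$ gaps each of length less than $a$ before $S_j$, and compares the resulting bound $C_j \leq (j-1)(2a+b) + a + L_j + b$ against the two lower bounds of \autoref{lemma:lbfinish} and \autoref{lemma:lbstart}. The only (welcome) difference is that you spell out the contradiction argument behind the no-idle-before-first-tasks claim, which the paper states as an immediate consequence of \autoref{obs:algA}.
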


\begin{proof}
Due to \autoref{lemma:algAtime}, it remains to prove that $\Calg\leq 2\Copt$.

From \autoref{obs:algA} we know that the machine is always busy just before a first task of a job is scheduled (except $a_1$, which starts at time $0$).
This means there are at most $j-1$ gaps before $S_j$, and the length of any of these gaps is smaller than $a$.
Hence, if we consider the partial schedule at the time when the algorithm schedules job $j$, we have 
\begin{align*}
S_j\leq(j-1)(a+b)+(j-1)a,\quad j\in\J,
\end{align*}
because (i) the machine is busy at most $(j-1)(a+b)$ time in $[0,S_j]$ with tasks of the jobs $1,\ldots,j-1$; and  (ii) the total length of the gaps before $S_j$ is smaller than~$(j-1)a$.
Therefore, we have $C_j \leq (j-1) (a+b) + (j-1) a + a + L_j + b$ and

\begin{align*}
\Calg &=\sum_{j=1}^n C_j\leq \frac{n (n-1) }{2}(a+b) + \frac{n (n-1) }{2}a + n (a+b) + \sum_{j=1}^n L_j\\
&\leq \Copt + \frac{n (n+1) }{2}a + nb + \sum_{j=1}^n L_j \leq 2\Copt,
\end{align*}
where the second inequality follows from \autoref{lemma:lbfinish} and from $n(n-1)/2+n=n(n+1)/2$, while the third follows from \autoref{lemma:lbstart}.
\end{proof}

As $1\|(a, L_j, b, b \leq a)\|\Cs$ is a more general version of the variant $1\|(p, L_j, p)\|\Cs$, \autoref{coll:pLjp} directly follows. 

\begin{corollary}
\label{coll:pLjp}
\autoref{algo:A} is a factor-$2$ approximation for $1\|(p, L_j, p)\|\Cs$.
\end{corollary}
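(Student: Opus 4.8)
The plan is to observe that $1\|(p, L_j, p)\|\Cs$ is nothing more than a restriction of the general variant $1\|(a, L_j, b, b \leq a)\|\Cs$ already handled by Theorem~\ref{lemma:aLjbsmallerb}, so that the desired bound follows immediately by specialization rather than by any fresh analysis of the algorithm.

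First I would record that in any instance of $1\|(p, L_j, p)\|\Cs$ we have $a_j = b_j = p$ for every job $j$, where $p$ is a fixed constant common to all jobs and the delay times $L_j$ may differ arbitrarily. Setting $a := p$ and $b := p$, such an instance is exactly an instance of $1\|(a, L_j, b)\|\Cs$ in which the two fixed processing times happen to coincide. In particular, the side constraint $b \leq a$ is satisfied, since $b = p = a$ gives equality. Hence every instance of $1\|(p, L_j, p)\|\Cs$ is simultaneously a valid instance of $1\|(a, L_j, b, b \leq a)\|\Cs$, and Algorithm~A, whose input is specified as a CTP instance with $a_j = a$ and $b_j = b$, accepts it without modification.

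With this identification in hand, the conclusion is immediate: Theorem~\ref{lemma:aLjbsmallerb} guarantees that on every instance of $1\|(a, L_j, b, b \leq a)\|\Cs$ Algorithm~A runs in polynomial time, produces a feasible schedule, and satisfies $\Calg \leq 2\Copt$. Since each $1\|(p, L_j, p)\|\Cs$ instance is one such instance, the same feasibility and the same bound $\Calg \leq 2\Copt$ hold there as well, which is precisely the claim of the corollary.

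There is essentially no obstacle to overcome. The whole argument reduces to the one-line remark that equal fixed task lengths trivially meet the requirement $b \leq a$, after which the proof of Theorem~\ref{lemma:aLjbsmallerb} applies verbatim. The only point worth verifying explicitly is that the arguments underlying that theorem, in particular Lemma~\ref{obs:algA} and Lemma~\ref{lemma:algAtime}, never use strict inequality $b < a$ but only $b \leq a$; a quick inspection confirms this, so the boundary case $b = a$ is genuinely covered and no separate treatment is needed.
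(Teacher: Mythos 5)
Your proposal is correct and matches the paper's own justification exactly: the corollary is stated in the paper as an immediate specialization of Theorem~\ref{lemma:aLjbsmallerb}, obtained by noting that $a_j = b_j = p$ satisfies $b \leq a$ with equality. Your extra check that the supporting lemmas only use $b \leq a$ rather than strict inequality is a sensible, if brief, addition.
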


The following lemma describes some important attributes of $\Salg$ in the opposite case of $a\leq b$.
\begin{lemma}\label{lem:aLjb}
Suppose that $a_j=a$, $b_j=b$ ($j\in\J$) and $a\leq b$.
Consider the partial schedule $\sigma^p$ created by \autoref{algo:A}, right before it schedules job $j$ starting at $S_j$.
Then, 
\begin{enumerate}[(i)]
\item $a_j$ or $b_j$ start right after another task (for each $j>1$),\label{lem:aLjb_ra}
\item the number of gaps before $S_j$ is at most $j-1$,\label{lem:aLjb_no_gap}
\item the length of each gap in $[0,S_j]$ is at most $b$.\label{lem:aLjb_length_gap}
\end{enumerate}
\end{lemma}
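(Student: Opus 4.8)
The plan is to prove the three parts in the stated order, deriving \emph{(ii)} and \emph{(iii)} from \emph{(i)}, arguing throughout by induction on the position $j$ of a job in the non-decreasing $L_j$ order fixed in step~\ref{step:A_sort_Lj} of \autoref{algo:A}. Part \emph{(i)} I would get directly from the minimality built into the ``as soon as possible'' rule. Since $a_1$ occupies $[0,a]$, every job $j>1$ has $S_j>0$, so there is genuine room to the left of $S_j$. If neither $a_j$ nor $b_j$ started immediately after a previously scheduled task, then both the first-task window $[S_j,S_j+a]$ and the second-task window $[S_j+a+L_j,S_j+a+L_j+b]$ would have a strictly idle neighbourhood on their left; hence some $\varepsilon\in(0,S_j]$ would leave both $[S_j-\varepsilon,S_j-\varepsilon+a]$ and the correspondingly shifted second-task window idle, contradicting that $S_j$ is the earliest feasible start. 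Thus at least one of the two tasks abuts a task on its left, which is exactly \emph{(i)}.

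Given \emph{(i)}, part \emph{(ii)} is immediate. Every maximal gap contained in $[0,S_j]$ is, by definition, the idle interval sitting immediately before the start of some task of a job $m\in\{1,\dots,j-1\}$. By \emph{(i)} applied to $m$, at most one of the two tasks $a_m,b_m$ can have a gap immediately to its left (the other abuts a task). Hence each of the $j-1$ already scheduled jobs owns the right endpoint of at most one gap in $[0,S_j]$, giving at most $j-1$ such gaps.

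The real work is part \emph{(iii)}, which I would prove by induction on $j$ together with \emph{(i)}. Fix a gap $[t_1,t_2]\subseteq[0,S_j]$; by the bookkeeping above it lies immediately before a task of some job $m<j$, and by \emph{(i)} the ``other'' task of $m$ abuts a task. The structure to exploit is $a\le b$ together with the sorted delays $L_1\le\dots\le L_j$: since job $m$ and every job scheduled after it but before $j$ was itself placed as early as possible, the only reason the idle stretch $(t_1,t_2)$ was not used to begin an earlier first task is that the associated second-task window was blocked. Tracing the blocking task $U$ that abuts the second task closing the gap, one relates $t_2-t_1$ to $|U|\le b$; when $U$ reaches back across the whole gap this yields $t_2-t_1\le|U|\le b$ directly.

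The main obstacle is precisely the case where the single abutting task $U$ does \emph{not} span the entire gap, since a crude single-blocker estimate only gives $t_2-t_1<2b$. To close this I would use the correspondence forced by the sorted delays: any second task $b_w$ that blocks the window for a candidate start inside $(t_1,t_2)$ has $L_w\le L_j$, which pushes its own first task $a_w$ out of the idle interval containing $[t_1,t_2]$, so that $a_w$ must start no earlier than the right end of that idle region. Such blockers therefore cannot be reused to lengthen the gap without leaving room to start job $j$ (or an intermediate job) strictly earlier than $S_j$, contradicting the definition of $S_j$. Making this exchange argument fully rigorous --- namely showing that a gap of length exceeding $b$ before $S_j$ always permits a strictly earlier feasible start --- is the crux; the rest is bookkeeping layered on top of \emph{(i)} and the inductive hypothesis that all earlier gaps have length at most $b$.
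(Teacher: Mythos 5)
Your treatment of parts \emph{(i)} and \emph{(ii)} is sound and essentially the paper's: \emph{(i)} comes from the minimality built into the ``as soon as possible'' rule, and \emph{(ii)} from charging each gap to the unique task starting at its right endpoint and using \emph{(i)} to see that each already-scheduled job supplies at most one such task. The paper proves \emph{(ii)} by a case analysis inside an induction, but the content is the same counting via \emph{(i)}; your version is, if anything, stated more directly.

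For part \emph{(iii)} there is a genuine gap, and you flag it yourself: you never actually prove that a gap of length exceeding $b$ lying before $S_j$ ``always permits a strictly earlier feasible start''; you only announce that making this rigorous is ``the crux''. Your sketch goes looking for the reason the gap was \emph{created} (tracing a blocking task $U$, worrying about a $2b$ single-blocker bound, proposing an exchange argument), but the paper needs none of that: it argues directly about where the \emph{next} job would be placed, not about how the gap arose. Concretely, suppose the partial schedule $\sigma^p$ before job $j+1$ contained a gap $[g_1,g_2]$ with $g_2-g_1>b$. The paper first observes, from the induction, that every first task $a_\ell$ with $\ell\le j$ completes before $g_1$; then starting $a_{j+1}$ at $g_1$ would be feasible, since $a\le b<g_2-g_1$ makes $a_{j+1}$ fit inside the gap, and the combination of ``$a_{j+1}$ starts after all previously scheduled first tasks'' with $L_{j+1}\ge L_\ell$ for all $\ell\le j$ keeps $b_{j+1}$ clear of the previously scheduled tasks. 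This contradicts the minimality of $S_{j+1}$, so no such gap can lie before $S_{j+1}$. The two ingredients your proposal lacks are exactly these: the inductive positioning of all earlier first tasks to the left of $g_1$, and the use of the sorted delay times to certify the second-task window of job $j+1$ itself (rather than of the jobs that formed the gap). Without them, the exchange argument you gesture at does not close.
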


\begin{proof}
Statement~(\ref{lem:aLjb_ra}) immediately follows from the algorithm.
We use induction on $j$ for proving the remaining statements (\ref{lem:aLjb_no_gap}) and (\ref{lem:aLjb_length_gap}); they are trivial for $j=1$.
Suppose that they are true for some $j\geq 1$, we then prove them for $j+1$.

Consider (\ref{lem:aLjb_no_gap}). If $a_{j+1}$ or $b_{j+1}$ either directly precede, or start right after some previously scheduled $a_k$, $k < j+1$, in $\sigma^p$, the statement immediately follows.
Otherwise, $a_{j+1}$ or $b_{j+1}$ either precede, or start right after some $b_k$ with $k<j+1$.
In the latter, $a_{j+1}$ either precedes, or starts right after $a_k$ due to the non-decreasing delay time order, from which the statement also immediately follows.
In the former (if $a_{j+1}$ starts right after $b_k$), (\ref{lem:aLjb_no_gap}) follows from the induction, since there is no gap in $\sigma^p$ right before $a_\ell$ or $b_\ell$ for each $\ell\leq j$ (as known from (\ref{lem:aLjb_ra})). Thus, there are at most $j$ tasks before $a_{j+1}$ which start right after a gap.

For proving (\ref{lem:aLjb_length_gap}), suppose for purpose of showing contradiction that there exists a gap $[g_1,g_2]$ before $b_k$ in $\sigma^p$ with a length larger than $b$.
We know from the induction that each $a_\ell$ ($\ell\leq j$) completes before $g_1$.
If $[g_1,g_2]$  existed however, the algorithm would schedule $a_{j+1}$ at time $g_1$: there would be neither an intersection between $a_{j+1}$ and any other task (since $a\leq b< g_2-g_1$), nor between $b_{j+1}$ and any other task (since $a_{j+1}$ starts later than the previously scheduled first tasks, and $L_{j+1}\geq L_{\ell}$ for each $\ell\leq j$). The existence of such a gap would contradict the definition of the algorithm, therefore, (\ref{lem:aLjb_length_gap}) follows.
\end{proof}

\begin{theorem}
\label{lemma:aLjbgreaterb}
\autoref{algo:A} is a factor-$3$ approximation for $1\|(a,L_j,b)\|\Cs$.
\end{theorem}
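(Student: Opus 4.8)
The plan is to split into the two cases $b \leq a$ and $a \leq b$. The case $b \leq a$ is already covered by \autoref{lemma:aLjbsmallerb}, which gives a factor-$2$ approximation, and this is in particular a factor-$3$ approximation. So the real work lies in the case $a \leq b$, where I would run the same argument as in \autoref{lemma:aLjbsmallerb} but replace \autoref{obs:algA} (which assumes $b \leq a$) by its counterpart \autoref{lem:aLjb}.

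Concretely, for $a \leq b$ I would first bound the starting time $S_j$ of the $j$-th scheduled job. By \autoref{lem:aLjb} there are at most $j-1$ gaps before $S_j$ (part~(\ref{lem:aLjb_no_gap})), each of length at most $b$ (part~(\ref{lem:aLjb_length_gap})); combined with the fact that the machine is busy for at most $(j-1)(a+b)$ time units in $[0,S_j]$ with the tasks of jobs $1,\ldots,j-1$, this gives $S_j \leq (j-1)(a+b) + (j-1)b$, and hence $C_j \leq (j-1)(a+b) + (j-1)b + a + L_j + b$.

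I would then sum over $j$ and group the terms so that $\frac{n(n-1)}{2}(a+b) + n(a+b)$ collapses to $\frac{n(n+1)}{2}(a+b)$, which is at most $\Copt$ by \autoref{lemma:lbfinish}. This leaves a residual of $\frac{n(n-1)}{2}b + \sum_{j=1}^n L_j$, and the final step is to absorb it into $2\Copt$: the term $\sum_{j=1}^n L_j$ is at most $\Copt$ by \autoref{lemma:lbstart}, and $\frac{n(n-1)}{2}b \leq \frac{n(n+1)}{2}(a+b) \leq \Copt$ again by \autoref{lemma:lbfinish}. This yields $\Calg \leq 3\Copt$; together with \autoref{lemma:algAtime} (feasibility and polynomial runtime), the theorem follows.

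The main difference from the factor-$2$ case, and the source of the extra factor, is that gaps may now be as long as $b$ rather than $a$. Consequently the gap contribution $\frac{n(n-1)}{2}b$ can no longer be merged with the $\frac{n(n+1)}{2}a$ term supplied by \autoref{lemma:lbstart}; it must instead be charged separately against the $(a+b)$-based bound of \autoref{lemma:lbfinish}, which is precisely what turns the factor $2$ into $3$. I expect the only delicate point to be confirming that the two gap estimates in \autoref{lem:aLjb} are exactly what the summation needs, and that neither lower bound alone suffices to absorb the residual; the remaining arithmetic is routine.
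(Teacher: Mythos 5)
Your proposal is correct and follows essentially the same route as the paper: the same case split on $b\leq a$ versus $a\leq b$, the same use of \autoref{lem:aLjb} to bound the number and length of gaps before $S_j$, and the same resulting bound $C_j \leq (j-1)(a+b)+(j-1)b+a+L_j+b$. The only difference is a trivial regrouping of the summed terms (the paper charges $\tfrac{n(n+1)}{2}a+nb+\sum_j L_j$ against one $\Copt$ via \autoref{lemma:lbstart} and $n(n-1)b$ against $2\Copt$ via \autoref{lemma:lbfinish}, whereas you split the residual as $\sum_j L_j$ plus $\tfrac{n(n-1)}{2}b$), which yields the same factor $3$.
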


\begin{proof}
Due to \autoref{lemma:algAtime}, it remains to prove that $\Calg\leq 3\Copt$.

If $b \leq a$, we can approximate $1\|(a,L_j,b)\|\Cs$ with a factor of $2$ (\autoref{lemma:aLjbsmallerb}). 
Therefore, w.l.o.g, we assume in the following that $a \leq b$. 
We show that, in this case, we get an approximation factor of $3$.

From \autoref{lem:aLjb} (\ref{lem:aLjb_no_gap}) and (\ref{lem:aLjb_length_gap}) we know that when the algorithm schedules job $j$, the total idle time of the machine in $[0,S_j]$ is at most $(j-1)b$. 
Due to the order of the jobs, the machine is busy in the partial schedule from $0$ to $S_j$ for at most  $(a+b) (j-1)$ time; thus we have 
$S_j\leq (a+b) (j-1) + b (j-1)$ and $C_j \leq (a+b) (j-1) + b (j-1) + (a +  L_j+b)$.
Hence,
\begin{align*}
\Calg &=\sum_{j=1}^n C_j\leq \sum_{j=1}^n (a+b) (j-1) + \sum_{j=1}^n b (j-1) + n (a + b) + \sum_{j=1}^n L_j\\
&= n(n-1)b+\left(\frac{n(n+1)}{2}a +nb+\sum_{j=1}^n L_j\right) \leq n(n-1)b+\Copt,\\
\end{align*}
where the second inequality follows from \autoref{lemma:lbstart}.
Since $\Coptf_j\geq (a+b)j$ by \autoref{lemma:lbfinish}, we have $\Copt\geq \frac{n(n+1)}{2} (a + b)$. 
Therefore,
\begin{align*}
\Calg \leq \Copt + n(n-1) b \leq 3 \Copt \enspace . \tag*{\qedhere}
\end{align*}
\end{proof}

\begin{theorem}
\label{lemma:1Lj1}
\autoref{algo:A} is a factor-$1.5$ approximation for $1\|(1,L_j,1)\|\Cs$.
\end{theorem}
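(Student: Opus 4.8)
The plan is to specialise \autoref{algo:A} to the unit case $a_j=b_j=1$ and to exploit that here \emph{both} $b\le a$ and $a\le b$ hold, so that the sharper structural statement of \autoref{obs:algA} is available rather than only the weaker \autoref{lem:aLjb}. Feasibility and the $\bigO(n\log n)$ running time are inherited directly from \autoref{lemma:algAtime}, so it again only remains to show $\Calg\le 1.5\,\Copt$. The heart of the argument is a much tighter bound on the start times $S_j$ than the one driving the factor-$3$ result of \autoref{lemma:aLjbgreaterb}.

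Concretely, I would first argue that in this unit setting there is no idle time before $S_j$. Since all processing times equal $1$ and the delays $L_j$ are integers, every task occupies a unit slot with integer endpoints, so the whole schedule is aligned to the integer grid. By \autoref{obs:algA}, $a_j$ is placed at the earliest $t'$ for which $[t',t'+1]$ is idle; by integrality this forces every unit slot strictly left of $S_j$ to be busy, i.e.\ the interval $[0,S_j)$ contains no idle time. As this interval is filled entirely by tasks of the $j-1$ previously scheduled jobs, each contributing total length $2$, I obtain $S_j\le 2(j-1)$ and hence
\begin{align*}
C_j = S_j + L_j + 2 \le 2j + L_j, \qquad \Calg=\sum_{j=1}^n C_j \le n(n+1) + \sum_{j=1}^n L_j.
\end{align*}

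The final step combines the two lower bounds. \autoref{lemma:lbfinish} gives $\Copt\ge\sum_{j=1}^n 2j = n(n+1)$, while \autoref{lemma:lbstart} gives $\Copt\ge \tfrac{n(n+1)}{2}+n+\sum_{j=1}^n L_j$. Splitting $\tfrac32\Copt=\tfrac12\Copt+\Copt$ and feeding the first bound into the $\tfrac12\Copt$ term and the second into the $\Copt$ term yields $\tfrac32\Copt\ge \tfrac12 n(n+1)+\bigl(\tfrac{n(n+1)}{2}+\sum_{j=1}^n L_j\bigr)=n(n+1)+\sum_{j=1}^n L_j\ge\Calg$, which is the claimed factor $1.5$. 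The main obstacle is the idle-freeness step: the generic estimate from \autoref{lem:aLjb} only caps each of the up-to-$(j-1)$ gaps by $b=1$, giving $S_j\le 3(j-1)$ and a ratio no better than the factor-$3$ of \autoref{lemma:aLjbgreaterb}. It is precisely the equality $a=b=1$ together with integrality of the data that upgrades this to a genuinely gap-free prefix $[0,S_j)$, and thus to the improved factor; the subsequent arithmetic is then routine.
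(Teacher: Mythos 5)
Your proposal is correct and takes essentially the same route as the paper's proof: the key bound $S_j\le 2(j-1)$ obtained from \autoref{obs:algA}, hence $C_j\le 2j+L_j$, combined with the two lower bounds of \autoref{lemma:lbfinish} and \autoref{lemma:lbstart} in exactly the same split. The only difference is that you make explicit the integrality of the $L_j$ that is needed to rule out idle time before $S_j$, a point the paper leaves implicit.
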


\begin{proof}
Due to \autoref{lemma:algAtime}, it remains to prove that $\Calg\leq 1.5\Copt$.
From \autoref{obs:algA} we know that the algorithm always schedules $a_j$ in the first gap.
Thus, the starting time of $j$ ($j\in\J$) is at most $2 (j-1)$, because the total processing time of the jobs  $\ell<j$ is $2 (j-1)$ and the other jobs start later. 
Therefore, $C_j\leq 2 (j-1) + 2 + L_j$, and 
\begin{align*}
\Calg &\leq 2 \sum_{j=1}^n (j-1) + 2 n + \sum_{j=1}^n L_j \\
&\leq  \Copt + \sum_{j=1}^n (j-1) + n \leq 1.5  \Copt,
\end{align*}
where the second inequality follows from \autoref{lemma:lbstart}, and the third from \autoref{lemma:lbfinish}.
\end{proof}

\subsection{CTP with fixed delay times}\label{subsec:fixL}

In this section we assume $L_j=L$ for each job $j\in\J$.
Consider \autoref{algo:B}, based on an idea of Ageev and Ivanov~\cite{ageev2016}.
Observe that both the first and the second tasks are in non-decreasing $a_j+b_j$ order in the schedule found by \autoref{algo:B}.

\begin{algorithm}
\caption{}
\label{algo:B}
\textbf{Input}: a CTP instance with $L_j = L$ for each job $j$

\textbf{Output}: a schedule $\sigma$ for this instance
\begin{enumerate}
    \item Sort the jobs in non-decreasing $a_j + b_j$ order. In the following, let the jobs be indexed in this order.
    \item Schedule $a_1$ from $0$ and $b_1$ from $a_1+L$. Let $s:=1$ and $j_s:=1$.
    \item For $j=2,\ldots,n$ do the following:
    \begin{enumerate}[i)]
        \item If it is possible to start $a_j$ immediately after $a_{j-1}$ without $b_j$ intersecting any job task on the schedule, then do it and consider the next job.\label{step:B_aj}
        \item If it is possible to start $b_j$ right after $b_{j-1}$ without $a_j$ intersecting any job tasks on the schedule, then do it and go to consider the next job.\label{step:B_bj}
        \item Otherwise, start $a_j$ immediately after $b_{j-1}$. Let $s:=s+1$ and $j_s:=j$.\label{step:B_scheduleend}
    \end{enumerate}

\end{enumerate}
\end{algorithm}

\begin{lemma}\label{lemma:algBtime}
\autoref{algo:B} runs in $\bigO(n \log n)$ time and always produces a feasible solution.
\end{lemma}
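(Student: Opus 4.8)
The plan is to prove the two assertions separately: feasibility by a structural induction over the loop, and the running time by an amortized-work argument mirroring the proof of \autoref{lemma:algAtime}.

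For feasibility I would carry the following invariant through the loop: after jobs $1,\dots,j$ have been placed, (a) the partial schedule has no intersecting tasks and respects every delay, and (b) both the first tasks and the second tasks occur in index order, i.e.\ $a_1,\dots,a_j$ are pairwise non-overlapping and ordered by start time, and likewise $b_1,\dots,b_j$. The delay constraint needs no separate treatment, since each $b_j$ is placed exactly $L$ after the completion of $a_j$ by construction, and the base case $j=1$ is immediate. For the inductive step I would treat the three branches. In branch~i) the task $a_j$ is appended right after the (by the invariant) rightmost first task $a_{j-1}$, which preserves the order and non-overlap among first tasks, while the branch's applicability condition ensures $a_j$ also clears the second-task block; the start time $S_j+a_j+L = S_{j-1}+a_{j-1}+a_j+L$ of $b_j$ exceeds that of $b_{j-1}$ by $a_j>0$, and the explicit non-intersection test then forces $b_j$ to begin no earlier than the completion of $b_{j-1}$, preserving (b) for the second tasks. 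Branch~ii) is symmetric, with the explicit test that $a_j$ does not intersect guaranteeing $a_j$ starts after $a_{j-1}$ completes. The key auxiliary fact I would isolate is that the completion time $C_{j-1}$ of the last-placed second task dominates the completion time of \emph{every} previously scheduled task: indeed $b_{j-1}$ finishes $L+b_{j-1}>0$ after $a_{j-1}$ does, and all earlier first and second tasks precede $a_{j-1}$ and $b_{j-1}$ respectively by (b). Consequently, in branch~iii) placing $a_j$ (hence $b_j$) at or after $C_{j-1}$ lands both tasks strictly to the right of everything already scheduled, so this fallback is always feasible and extends both orders, completing the induction.

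For the running time, the sort in the first step costs $\bigO(n\log n)$, and I would argue the loop runs in $\bigO(n)$ total. The point is that each of the three tests reduces to a constant number of comparisons against maintained frontier quantities rather than a scan of the whole schedule. Using the invariant, the test for $b_j$ in branch~i) amounts to checking $a_j\geq b_{j-1}$ (so that $b_j$ clears the completion of the last second task), while the test that $a_j$ fits involves only the left boundary of the current segment's second-task block; symmetric remarks hold for branch~ii), and branch~iii) needs no test. By keeping pointers to the completion of the last first task, the completion of the last second task, and the relevant boundary of the current segment's second-task block, each advanced monotonically, every task boundary is inspected only a constant number of times over the whole execution, exactly as in \autoref{lemma:algAtime}. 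This yields $\bigO(n)$ for the loop and $\bigO(n\log n)$ overall.

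The main obstacle I anticipate is the bookkeeping behind the constant-time tests: because branches~i) and~ii) can leave gaps inside the first-task and second-task blocks of a segment, a naive implementation would rescan these gaps and cost $\bigO(n^2)$. The heart of the argument is therefore the monotone-pointer (amortization) claim that lets every test be resolved locally, together with the auxiliary fact that $C_{j-1}$ exceeds all earlier completions, which underlies both the feasibility of branch~iii) and the locality of the tests. Making these two points precise is where I would spend the most care; the remaining verifications are routine.
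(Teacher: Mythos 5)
Your proposal is correct and takes essentially the same route as the paper's (much terser) proof: feasibility follows from the explicit non-intersection tests in branches i) and ii) together with the observation that branch iii) places the current job after all previously scheduled tasks, and the $\bigO(n\log n)$ bound is dominated by the initial sort. Your elaboration of the induction and the constant-time frontier comparisons simply fills in details the authors declare straightforward.
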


\begin{proof}
The run time of \autoref{algo:B} is straightforward.
Together with the observation that in step~\ref{step:B_scheduleend}, we schedule the current job after all the other tasks, it is clear that there are no intersections in the schedule produced by the algorithm.
\end{proof}

Let $B_s:=\{j_s, j_{s}+1,\ldots, j_{s+1}-1\}$ be the $s^{\textnormal{th}}$ {\emph{block}} ($1 \leq s \leq n$) and let $H_s:=C_{j_{s+1}-1}-S_{j_s}$ denote the length of $B_s$.
The next lemma describes an important observation on the gap sizes within a block and follows directly from the equal delay times and some simple algebraic calculations (see~\autoref{fig:blockgapsize} for illustration).

\begin{lemma}\label{lem:ajLbj}
If jobs $j$ and $j+1$ are in the same block, then $a_{j+1}$ starts immediately after $a_j$  or $b_{j+1}$ starts immediately after $b_{j}$.
In the former case the gap between the second tasks is at most $a_{j+1}-b_j$, while in the latter case the gap between the first tasks is at most $b_j-a_{j+1}$.
\end{lemma}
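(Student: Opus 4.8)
The plan is to separate the statement into its two halves: first the dichotomy (either $a_{j+1}$ starts immediately after $a_j$, or $b_{j+1}$ starts immediately after $b_j$), and then the two gap bounds. The dichotomy is purely structural and follows from the way \autoref{algo:B} forms blocks, whereas the gap bounds reduce to elementary arithmetic that exploits the common delay time $L$.

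For the dichotomy I would argue directly from the algorithm. A new block begins exactly when the algorithm executes step~\ref{step:B_scheduleend} while processing some job, as this is the only branch that sets $j_s := j$ and increments $s$. Since $j$ and $j+1$ lie in the same block, job $j+1$ is not the first job of its block, so it was not scheduled via step~\ref{step:B_scheduleend}. Hence it was placed either by step~\ref{step:B_aj}, which by definition starts $a_{j+1}$ immediately after $a_j$, or by step~\ref{step:B_bj}, which starts $b_{j+1}$ immediately after $b_j$. This gives the claimed dichotomy.

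For the gap bounds I would fix $S_j$ and write down the four relevant intervals, everywhere using $L_j = L_{j+1} = L$. In the first case, $a_{j+1}$ occupies $[S_j + a_j,\, S_j + a_j + a_{j+1}]$, so $b_{j+1}$ starts at $S_j + a_j + a_{j+1} + L$, while $b_j$ completes at $S_j + a_j + L + b_j$; subtracting shows the time window between the completion of $b_j$ and the start of $b_{j+1}$ has length $a_{j+1} - b_j$, which is nonnegative because feasibility of step~\ref{step:B_aj} forces $a_{j+1} \geq b_j$. Any idle period inside this window can only be shorter, giving the bound on the gap between the second tasks. The second case is symmetric: starting $b_{j+1}$ right after $b_j$ and again using the common delay $L$, the window between the completion of $a_j$ and the start of $a_{j+1}$ has length $b_j - a_{j+1} \geq 0$, bounding the gap between the first tasks.

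I do not expect a genuine obstacle here; the lemma is essentially bookkeeping against \autoref{fig:blockgapsize}. The one point that deserves an explicit sentence is the distinction between the computed window length and the actual \emph{gap}, which by definition is idle machine time: since other tasks placed inside the window only shrink the idle period, the window length is an upper bound on the gap, which is precisely why the statement reads ``at most'' rather than ``equals.''
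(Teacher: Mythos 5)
Your proposal is correct and follows exactly the route the paper intends: the paper states this lemma without a written proof, remarking only that it ``follows directly from the equal delay times and some simple algebraic calculations,'' and your argument supplies precisely those details (the dichotomy from the fact that only step~\ref{step:B_scheduleend} opens a new block, and the window lengths $a_{j+1}-b_j$ and $b_j-a_{j+1}$ from the common delay $L$). Your explicit remarks that feasibility of the chosen step forces the relevant difference to be nonnegative, and that the idle gap can only be a sub-interval of the computed window, are exactly the right points to make.
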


\begin{figure}
\centering

\begin{tikzpicture}

\def\ox{0} 
\def\oy{0} 
\coordinate(o) at (\ox,\oy);

\def\tl{11.0}
\draw [-latex](\ox,\oy) node[above left]{} -- (\ox+\tl,\oy) node[above,font=\small]{$t$};


\def\pi{0.5}
\tikzstyle{mystyle}=[draw, minimum height=0.5cm,rectangle, inner sep=0pt,font=\scriptsize]
\tikzstyle{mystyle2}=[draw = none, minimum height=0.25cm,rectangle, inner sep=0pt,font=\scriptsize]

\node(b1) [above right=-0.01cm and -0.01cm of o,mystyle, minimum width=0.5 cm,pattern=north west lines, pattern color=red]{};
\node(b1_t) [mystyle2, fill = white] at (b1.center) {$a_1$};
\node(b2) [right=4cm of b1,mystyle, minimum width=0.5 cm,pattern=north west lines, pattern color=red]{};
\node(b2_t) [mystyle2, fill = white] at (b2.center) {$b_1$};
\node(b3) [right=0cm of b1,mystyle, minimum width=1 cm,pattern=north east lines, pattern color=green]{};
\node(b3_t) [mystyle2, fill = white] at (b3.center) {$a_2$};
\node(b4) [right=4cm of b3,mystyle, minimum width=1.3 cm,pattern=north east lines, pattern color=green]{};
\node(b4_t) [mystyle2, fill = white] at (b4.center) {$b_2$};
\node(b5) [right=0.5cm of b3,mystyle, minimum width=0.8 cm,pattern=vertical lines, pattern color=yellow]{};
\node(b5_t) [mystyle2, fill = white] at (b5.center) {$a_3$};
\node(b6) [right=4cm of b5, mystyle, minimum width=2 cm,pattern=vertical lines, pattern color=yellow]{};
\node(b6_t) [mystyle2, fill = white] at (b6.center) {$b_3$};

\draw [<->] (0.5,0.65)--node[above]{\small $L$}(4.5,0.65);
\draw [<->] (1.5,-0.3)--node[below]{\small $(b_2 - a_3)$}(2,-0.3);
\draw [<->] (5,-0.3)--node[below]{\small $(a_2 - b_1)$}(5.5,-0.3);
\end{tikzpicture}

\caption{Example for gaps in different cases.
}
\label{fig:blockgapsize}
\end{figure}
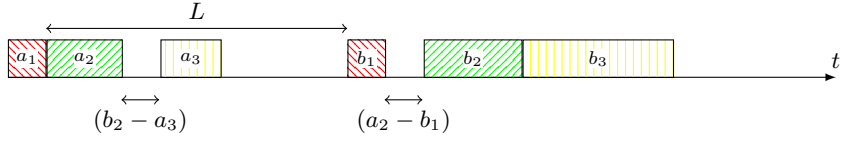

\begin{theorem}
\label{lemma:ajLbj}
\autoref{algo:B} is a factor-$3$ approximation for $1\|(a_j,L,b_j)\|\Cs$.
\end{theorem}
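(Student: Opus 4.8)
The plan is to bound the completion time of each job and then invoke the two lower bounds from \autoref{lemma:lbfinish} and \autoref{lemma:lbstart}. Feasibility and the $\bigO(n\log n)$ running time are already guaranteed by \autoref{lemma:algBtime}, so it remains to show $\Calg \le 3\Copt$. Since $L_j=L$ and the jobs are indexed in non-decreasing $a_j+b_j$ order, both lower bounds apply with exactly this indexing. Writing $C_j=S_j+a_j+L+b_j$, I would aim for the per-job estimate $C_j \le 2\sum_{i=1}^{j}(a_i+b_i)+L$, which sums to $\Calg \le 2\sum_{j=1}^n\sum_{i=1}^{j}(a_i+b_i)+nL \le 2\Copt+\Copt=3\Copt$, using \autoref{lemma:lbfinish} for the double sum and \autoref{lemma:lbstart} (with $\sum_{j}L_j=nL$, hence $nL\le\Copt$) for the last step.

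To obtain this estimate I would split $S_j$ into the busy time and the idle time in $[0,S_j]$. For the busy part, the first tasks appear in index order, so $a_1,\dots,a_{j-1}$ all complete before $S_j$, while every second task $b_i$ and every first task $a_i$ with $i\ge j$ starts no earlier than $S_j$; hence the machine is busy for at most $\sum_{i=1}^{j-1}(a_i+b_i)$ time in $[0,S_j]$. The core of the argument is to show that the idle time in $[0,S_j]$ is likewise at most $\sum_{i=1}^{j-1}(a_i+b_i)$, giving $S_j\le 2\sum_{i=1}^{j-1}(a_i+b_i)$ and therefore $C_j\le 2\sum_{i=1}^{j-1}(a_i+b_i)+(a_j+b_j)+L\le 2\sum_{i=1}^{j}(a_i+b_i)+L$.

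For the idle bound I would combine \autoref{lem:ajLbj} with the block structure. Consecutive blocks abut, since step~\ref{step:B_scheduleend} starts each new block immediately after the last second task of the previous one, so no idle time ever occurs between blocks. Within a block, \autoref{lem:ajLbj} bounds every gap between two consecutive first tasks by $b_k-a_{k+1}$ and every gap between two consecutive second tasks by $a_{k+1}-b_k$ for the relevant job $k$; summing over the jobs preceding $j$ contributes at most $\sum_{i<j}b_i$ and $\sum_{i<j}a_i$, i.e. at most $\sum_{i<j}(a_i+b_i)$ in total. The only remaining source of idle time is the delay region of a block (the portion of the length-$L$ interval not covered by tasks). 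Here the key observation is that the sorting by $a_j+b_j$ forces the jobs with large second tasks, which produce long dense second regions and thereby trigger the opening of a new block, to appear late; consequently any delay-region idle that precedes a later start time is dominated by the already large $\sum_{i<j}(a_i+b_i)$, while delay-region idle in the final block precedes no start time at all.

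I expect this last point to be the main obstacle: rigorously ruling out an accumulation of delay-region idle before some start time $S_j$, and proving cleanly that whenever \autoref{algo:B} is forced to open a new block the delay region of the current block is essentially filled by subsequent first tasks. Once the idle bound $\sum_{i<j}(a_i+b_i)$ is established, the rest is the routine summation together with the two applications of the lower-bound lemmas described above.
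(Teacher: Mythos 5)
Your overall strategy coincides with the paper's: bound each $C_j$ by $2\sum_{i=1}^{j}(a_i+b_i)+L$, then apply \autoref{lemma:lbfinish} and \autoref{lemma:lbstart}. However, there is a genuine gap exactly where you suspect one: you never establish the bound on the accumulated idle time coming from the delay regions of the blocks, and the informal appeal to ``jobs with large second tasks appearing late'' does not substitute for it. If job $j$ lies in block $B_s$, the interval $[0,S_j]$ contains $s-1$ complete delay windows of length $L$ each, and the uncovered parts of these windows are idle time that your decomposition must account for; a priori this can be close to $(s-1)L$, which is not obviously dominated by $\sum_{i<j}(a_i+b_i)$. The gap bounds from \autoref{lem:ajLbj}, which you use correctly, only control the gaps between consecutive first tasks and between consecutive second tasks within a block; they say nothing about how much of each length-$L$ delay window is actually filled.

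The missing ingredient is a charging argument tied to the event that forces a new block to open. The paper observes that block $B_{s+1}$ is opened only because either (a) $a_{j_{s+1}}$ does not fit in the remaining gap before $b_{j_s}$, which yields $\sum_{i=j_s+1}^{j_{s+1}}a_i+\sum_{i=j_s+1}^{j_{s+1}-1}b_i>L$, or (b) the second tasks of $B_s$ together with their gaps exceed $L$, which yields $\sum_{i=j_s+1}^{j_{s+1}-1}a_i+\sum_{i=j_s}^{j_{s+1}-1}b_i>L$. Since the index ranges of these witness inequalities are disjoint across different blocks, summing them gives $(s-1)L<\sum_{i=1}^{j_s-1}(a_i+b_i)+a_{j_s}$, which is precisely the inequality that converts the ``number of blocks times $L$'' term into a sum of processing times and closes the argument. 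Without some version of this inequality your per-job bound $C_j\le 2\sum_{i=1}^{j}(a_i+b_i)+L$ is unproven, so the proof as proposed is incomplete; everything else in your outline (the busy-time bound, the within-block gap accounting, and the final summation against the two lower-bound lemmas) matches the paper's argument.
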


\begin{proof}
Due to \autoref{lemma:algBtime}, it remains to prove that $\Calg\leq 3\Copt$.
The length of a block $B_s$ is the sum of the following: (i) the lengths of the first tasks of the jobs in $B_s$,  (ii) the lengths of the gaps among these tasks, (iii) $L$, and (iv) the length of the second task of the last job in $B_s$, i.e., $H_s=\sum_{i=j_s}^{j_{s+1}-1}a_i+ \sum_{i=j_s}^{j_{s+1}-2}G_i+L+b_{j_{s+1}-1}$, where $G_i\geq 0$ is the length of the gap between $a_i$ and $a_{i+1}$.
From \autoref{lem:ajLbj}, we have $G_i\leq \|b_i-a_{i+1}\|\leq b_i$, thus
\begin{align}
H_s\leq \sum_{i=j_s}^{j_{s+1}-1}(a_i+b_i)+L. \label{eq:Hs_general}
\end{align}

Observe that the algorithm starts a new block every time it cannot schedule the next upcoming job in steps~\ref{step:B_aj}) and \ref{step:B_bj}).
Therefore, there can be two reasons why $j_{s+1}$ cannot be scheduled in $B_s$: (a) the gap between $a_{j_{s+1}-1}$ and $b_{j_s}$ is smaller than $a_{j_{s+1}}$ or (b) the completion time of $b_{j_{s+1}-1}$ minus the starting time of $b_{j_s}$ is larger than $L$, see~\autoref{fig:newblockcases}.

\begin{figure}
\centering

\begin{tikzpicture}

\def\ox{0} 
\def\oy{0} 
\coordinate(o) at (\ox,\oy);

\def\tl{11.2}
\draw [-latex](\ox,\oy) node[above left]{} -- (\ox+\tl,\oy) node[above,font=\small]{$t$};


\def\pi{0.5}
\tikzstyle{mystyle}=[draw, minimum height=0.5cm,rectangle, inner sep=0pt,font=\scriptsize]
\tikzstyle{mystyle2}=[draw = none, minimum height=0.25cm,rectangle, inner sep=0pt,font=\scriptsize]

\node(b1) [above right=-0.01cm and -0.01cm of o,mystyle, minimum width=0.4 cm,pattern=north west lines, pattern color=red]{};
\node(b1_t) [mystyle2, fill = white] at (b1.center) {$a_1$};
\node(b2) [right=1cm of b1,mystyle, minimum width=0.4 cm,pattern=north west lines, pattern color=red]{};
\node(b2_t) [mystyle2, fill = white] at (b2.center) {$b_1$};
\node(b3) [right=0cm of b1,mystyle, minimum width=0.4 cm,pattern=north east lines, pattern color=green]{};
\node(b3_t) [mystyle2, fill = white] at (b3.center) {$a_2$};
\node(b4) [right=1cm of b3,mystyle, minimum width=0.5 cm,pattern=north east lines, pattern color=green]{};
\node(b4_t) [mystyle2, fill = white] at (b4.center) {$b_2$};
\node(b5) [right=0cm of b4,mystyle, minimum width=1.1 cm,pattern=vertical lines, pattern color=yellow]{};
\node(b5_t) [mystyle2, fill = white] at (b5.center) {$a_3$};
\node(b6) [right=1cm of b5, mystyle, minimum width=0.5 cm,pattern=vertical lines, pattern color=yellow]{};
\node(b6_t) [mystyle2, fill = white] at (b6.center) {$b_3$};
\node(b7) [right=0cm of b5,mystyle, minimum width=0.7 cm,pattern=horizontal lines, pattern color=cyan]{};
\node(b7_t) [mystyle2, fill = white] at (b7.center) {$a_4$};
\node(b8) [right=1.2cm of b7, mystyle, minimum width=1.8 cm,pattern=horizontal lines, pattern color=cyan]{};
\node(b8_t) [mystyle2, fill = white] at (b8.center) {$b_4$};
\node(b9) [right=0cm of b8,mystyle, minimum width=0.4 cm,pattern=crosshatch, pattern color=magenta]{};
\node(b9_t) [mystyle2, fill = white] at (b9.center) {$a_5$};
\node(b10) [right=1cm of b9, mystyle, minimum width=2.4 cm,pattern=crosshatch, pattern color=magenta]{};
\node(b10_t) [mystyle2, fill = white] at (b10.center) {$b_5$};

\draw [<->] (0.4,0.65)--node[above]{\small $L$}(1.4,0.65);
\draw [<->] (0.8,-0.3)--node[below]{\small $< a_3$}(1.4,-0.3);
\draw [<->] (4.5,-0.3)--node[below]{\small $>L$}(7.1,-0.3);
\end{tikzpicture}

\caption{Reasons of starting a new block: $a_3$ does not fit between $a_2$ and $b_1$, while  the total completion times of the second tasks in the second block ($b_3+b_4$) and the gap between them is larger than $L$.
}
\label{fig:newblockcases}
\end{figure}
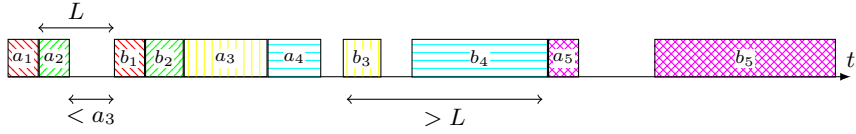

In case of (a), we have $\sum_{i=j_s+1}^{j_{s+1}}a_i+\sum_{i=j_s+1}^{j_{s+1}-1}b_i>L$, while in case of (b), we have  $\sum_{i=j_s+1}^{j_{s+1}-1}a_i+\sum_{i=j_s}^{j_{s+1}-1}b_i>L$ (from \autoref{lem:ajLbj}).
For each block, at least one of the previous inequalities holds, and neither of those inequalities contains any task occurring in any other inequality of another block. 
Hence, summing the valid inequalities for the first $s-1$ blocks, we have 
\begin{align}
(s-1)L<\sum_{i=1}^{j_{s}-1}(a_i+b_i)+a_{j_s}.\label{eq:boundL_general}    
\end{align}

Applying \autoref{eq:Hs_general} and \autoref{eq:boundL_general} to bound the completion time $C_j$ of a job $j$ in $B_s$, we get
\begin{align*}
C_j&\leq  \sum_{k=1}^{s-1} H_k+\sum_{i=j_s}^j (a_i+b_i)+L \leq \sum_{i=1}^j (a_i+b_i) +sL\\
&< 2\sum_{i=1}^{j} (a_i+b_i)+L\leq 3 \Coptf_j,     
\end{align*}
where the last inequality follows from \autoref{lemma:lbfinish} and from $L<\Coptf_j$.
Summing over all jobs, the Theorem follows.
\end{proof}

\begin{theorem}
\label{lemma:pjLpj}
\autoref{algo:B} is a factor-$1.5$ approximation for $1\|(p_j,L,p_j)\|\Cs$.
\end{theorem}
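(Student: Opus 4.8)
The plan is to reuse the block framework from the proof of \autoref{lemma:ajLbj} for \autoref{algo:B}, specialized to $a_j=b_j=p_j$, and to sharpen both of its two main estimates by a factor of two so that the guarantee drops from $3$ to $1.5$; feasibility and the running time are already covered by \autoref{lemma:algBtime}, so only $\Calg\le 1.5\Copt$ remains. Index the jobs in non-decreasing $p_j$ order (this is simultaneously the $a_j+b_j$ order used by \autoref{algo:B} and the $a_j$ order of \autoref{lemma:lbstart}), write $P_j:=\sum_{i=1}^{j}p_i$, and let $B_s=\{j_s,\dots,j_{s+1}-1\}$ be the blocks. By \autoref{lemma:lbfinish} we have $\Copt\ge 2\sum_{j}P_j$, and by \autoref{lemma:lbstart} we have $\Copt\ge \sum_{j}P_j+\sum_j L+\sum_j p_j$. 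The goal is to split $\Calg$ into a processing part charged to the finishing bound and a delay part charged to the starting bound.

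First I would exploit the equal-processing-time structure through \autoref{lem:ajLbj}: within a block the gap between two consecutive first (or second) tasks equals $\lvert p_i-p_{i+1}\rvert=p_{i+1}-p_i$, so these gaps telescope over a block to at most $p_{\max(B_s)}-p_{\min(B_s)}$. Since the first tasks are then contiguous inside a block, this refines the crude length bound $H_s\le \sum_{i\in B_s}(a_i+b_i)+L$ to $H_s\le \sum_{i\in B_s}p_i+p_{\max(B_s)}+L$, and a short computation gives $C_j\le 2P_j+sL$ for a job $j$ in block $s$. Secondly, and more importantly, I would sharpen the block-termination inequality: because $a_i=b_i$ and the jobs are sorted, a block closes precisely when appending the next first task would collide with the block's earliest second task, which yields $\sum_{i=j_s+1}^{j_{s+1}}p_i>L$. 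Summing these inequalities over the first $s-1$ blocks telescopes to $(s-1)L<P_{j_s}-p_1$, a factor-two improvement over the estimate $(s-1)L<\sum_{i=1}^{j_s-1}(a_i+b_i)+a_{j_s}$ used in \autoref{lemma:ajLbj}. Combined, these give $C_j\le 2P_j+sL$ with $sL<P_{j_s}+L\le P_j+L$, i.e.\ a completion time controlled by $P_j$ plus a single extra delay~$L$.

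Summing over all jobs, the processing contribution $2\sum_jP_j$ is absorbed by $\Copt$ via \autoref{lemma:lbfinish}, while the accumulated delay contribution has to be matched against the $\sum_j L$ term of \autoref{lemma:lbstart}. The main obstacle is exactly this matching: a naive use of $sL<P_{j}+L$ together with the block-length bound charges the telescoped processing mass $P_{j_s}$ twice — once through the within-block term $p_{\max(B_k)}$ and once through the delay term $(s-1)L$ — and therefore only delivers a factor of $2$. To reach $1.5$ I expect to have to charge each block's delay $L$ and its leftover processing to \emph{disjoint} chunks of the two lower bounds, using the sharpened termination inequality $\sum_{i=j_s+1}^{j_{s+1}}p_i>L$ to bind every block's delay to a stretch of processing time that is not otherwise spent. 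Verifying that these two charges never overlap is the delicate point; I would do this by a careful block-by-block accounting and, if needed, a short case distinction according to whether $\sum_j L$ or $\sum_j P_j$ dominates, so that the processing mass pays for at most $\tfrac12\Copt$ on top of the base $\Copt$.
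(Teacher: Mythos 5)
Your setup coincides with the paper's: the same block decomposition, the same refined block length $H_s=\sum_{k=j_s}^{j_{s+1}-1}p_k+L+p_{j_{s+1}-1}$, and the same termination inequality $\sum_{k=j_s+1}^{j_{s+1}}p_k>L$ telescoped over the first $s-1$ blocks. Your intermediate bound $C_j\le 2P_j+sL$ with $sL<P_j+L$ (where $P_j=\sum_{i\le j}p_i$) is also correct. But the proof stops exactly where it gets hard: as you yourself observe, $C_j<3P_j+L$ combined with $\Coptf_j\ge\max(2P_j,\,P_j+L)$ only yields a factor of $2$, and the passage to $1.5$ is left as a plan (``I expect to have to charge\dots'', ``I would do this by a careful block-by-block accounting''). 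That missing step is the actual content of the theorem, so as it stands this is a genuine gap, not a complete proof.

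For comparison, the paper closes the gap by applying the termination inequality a \emph{second} time, to block $B_1$ alone, which converts the residual $L$ into $\sum_{k=2}^{j_2}p_k$; since jobs $2,\dots,j_2$ are the smallest jobs, the resulting coefficient pattern still sums to at most $3\sum_{k=1}^{j}p_k$, giving $C_j<1.5\,\Coptf_j$ per job for every $j>j_2$ (jobs in $B_1$ satisfy $C_j\le\Coptf_j$ directly via \autoref{lemma:lbstart}). Crucially, this per-job bound \emph{fails} for $j=j_2$: one finds $C_{j_2}=\sum_{k=1}^{j_2}p_k+p_{j_2-1}+p_{j_2}+2L$, which can exceed $1.5\,\Coptf_{j_2}$, and the paper repairs this by amortizing $C_{j_2}$ against $C_1$ (job $1$ finishes well below $\Coptf_1$, leaving slack). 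Any ``block-by-block'' or ``disjoint chunks'' accounting of the kind you sketch will run into this same exceptional index, so you should expect to need a cross-job amortization step, not merely a disjointness argument between the two lower bounds. Until the absorption of the final $L$ and the treatment of $j=j_2$ are written out, the claimed factor $1.5$ is not established.
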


\begin{proof}
Due to \autoref{lemma:algBtime}, it remains to prove that $\Calg\leq 1.5\Copt$.

Consider an arbitrary optimal solution.
Note that, in the present case, the non-decreasing $a_j+b_j$ order is the same as the non-decreasing $a_j$ order. Thus, we can use both lower bounds on the optimum described in \autoref{sec:lbopt}. Also, $\Coptf_j=\Copts_j$ follows directly from all delay times being fixed.

Consider jobs $j$ and $j+1$ from the same block.
From \autoref{lem:ajLbj}, and from $p_j\leq p_{j+1}$, we know that there is no gap between $a_j$ and $a_{j+1}$ in $\Salg$. 
Thus, the length of a block $B_s$ can be expressed as
\begin{align}
H_s = \sum_{k=j_s}^{j_{s+1}-1} p_k + L + p_{j_{s+1}-1}. \label{eq:Hs}
\end{align}
Since $j_s$ could not be scheduled in $B_{s-1}$, we have 
\begin{align}
\sum_{k=j_{s-1}+1}^{j_s} p_k>L, \enspace s \geq 2, \label{eq:boundL}   
\end{align}
see~\autoref{fig:pjLpj}.

\begin{figure}
\centering

\begin{tikzpicture}

\def\ox{0} 
\def\oy{0} 
\coordinate(o) at (\ox,\oy);

\def\tl{11.0}
\draw [-latex](\ox,\oy) node[above left]{} -- (\ox+\tl,\oy) node[above,font=\small]{$t$};


\def\pi{0.5}
\tikzstyle{mystyle}=[draw, minimum height=0.5cm,rectangle, inner sep=0pt,font=\scriptsize]
\tikzstyle{mystyle2}=[draw = none, minimum height=0.25cm,rectangle, inner sep=0pt,font=\scriptsize]

\node(b1) [above right=-0.01cm and -0.01cm of o,mystyle, minimum width=0.5 cm,pattern=north west lines, pattern color=red]{};
\node(b1_t) [mystyle2, fill = white] at (b1.center) {$p_1$};
\node(b2) [right=2.3cm of b1,mystyle, minimum width=0.5 cm,pattern=north west lines, pattern color=red]{};
\node(b2_t) [mystyle2, fill = white] at (b2.center) {$p_1$};
\node(b3) [right=0cm of b1,mystyle, minimum width=0.5 cm,pattern=north east lines, pattern color=green]{};
\node(b3_t) [mystyle2, fill = white] at (b3.center) {$p_2$};
\node(b4) [right=2.3cm of b3,mystyle, minimum width=0.5 cm,pattern=north east lines, pattern color=green]{};
\node(b4_t) [mystyle2, fill = white] at (b4.center) {$p_2$};
\node(b5) [right=0cm of b3,mystyle, minimum width=0.8 cm,pattern=vertical lines, pattern color=yellow]{};
\node(b5_t) [mystyle2, fill = white] at (b5.center) {$p_3$};
\node(b6) [right=2.3cm of b5, mystyle, minimum width=0.8 cm,pattern=vertical lines, pattern color=yellow]{};
\node(b6_t) [mystyle2, fill = white] at (b6.center) {$p_3$};
\node(b7) [right=0cm of b6,mystyle, minimum width=1.3 cm,pattern=horizontal lines, pattern color=cyan]{};
\node(b7_t) [mystyle2, fill = white] at (b7.center) {$p_4$};
\node(b8) [right=2.3cm of b7, mystyle, minimum width=1.3 cm,pattern=horizontal lines, pattern color=cyan]{$p_4$};
\node(b8_t) [mystyle2, fill = white] at (b8.center) {$p_4$};

\draw [<->] (0.5,0.65)--node[above]{\small $L$}(2.8,0.65);
\draw [<->] (0.5,-0.3)--node[below]{\small $p_2 + p_3 + p_4$}(3.1,-0.3);
\end{tikzpicture}

\caption{
Since $p_2 + p_3 + p_4 > L$, job $j_4$ has to be scheduled in a new block.}
\label{fig:pjLpj}
\end{figure}
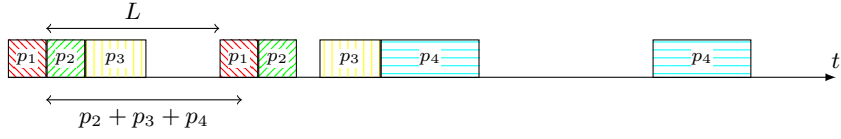

In the remaining part of the proof we compare $C_j$ and $\Coptf_j$, i.e., the completion time of job $j$ in $\Salg$ and the completion time of the $j^{\textnormal{th}}$ finishing (or starting) job in a fixed optimal schedule.
For all $j > j_2$, we will prove $C_j\leq 1.5\Coptf_j$, but this inequality does not necessarily always hold true for $j=j_2$.
However, with a more sophisticated analysis, we still manage to prove $\Calg\leq 1.5\Copt$ over the total of all jobs.

Let job $j$ be a job in some block $B_s$, where $s\geq 2$ and $j\neq j_2$. 
Then the completion time $C_j$ of $j$ can be expressed as:
\[
C_j = \sum_{i=1}^{s-1} H_i + \sum_{k=j_s}^j p_k + L + p_j.
\]
Using \autoref{eq:Hs}, we get
\[
C_j < \sum_{i=1}^{s-1} \left(\sum_{k=j_i}^{j_{i+1}-1} ( p_k + L + p_{j_{i+1}-1})\right) + \sum_{k=j_s}^j p_k + L + p_j \leq \sum_{i=1}^j p_i + \sum_{i=1}^{s-1}p_{j_{i+1}-1}+ sL + p_j.
\]
Applying Inequality~\ref{eq:boundL} once for blocks $B_1,\ldots,B_{s-1}$, we get:
\[
C_j < p_1+2\sum_{k=2}^{j_s} p_k + \sum_{i=1}^{s-1} p_{j_{i+1}-1}+ \sum_{k=j_s+1}^{j-1}p_k+ 2p_j+L.
\]
Applying it again for $B_1$, and then using $p_{j_2-1}\leq p_j$ (since $j>j_2$), we have
\begin{align*}
C_j&< p_1+3\sum_{k=2}^{j_2}p_k+2\sum_{k=j_2+1}^{j_s} p_k + \sum_{i=1}^{s-1} p_{j_{i+1}-1}+ \sum_{k=j_s+1}^{j-1}p_k+ 2p_j\\
&\leq p_1+3\sum_{k=2}^{j_2}p_k+2\sum_{k=j_2+1}^{j_s} p_k + \sum_{i=2}^{s-1} p_{j_{i+1}-1}+ \sum_{k=j_s+1}^{j-1}p_k+ 3p_j\\
&\leq 3\sum_{k=1}^j p_k.    
\end{align*}

Using \autoref{lemma:lbfinish} on this statement, we have $C_j<1.5C_j^{OPT}$ and thus,
\begin{align}
\sum_{j>j_2} C_j<1.5\sum_{j>j_2} C_j^{OPT_f}. \label{eq:final_jj2}
\end{align}

If $j$ is in $B_1$, we have $C_j=\sum_{k=1}^jp_k+L+p_j\leq C_{j}^{OPT_f}$, where the last inequality follows from \autoref{lemma:lbstart}.


Now, if $j=j_2$, we then have $C_{j_2}=H_1+2p_{j_2}+L=\sum_{k=1}^{j_2}p_k+p_{j_2-1}+p_{j_2}+2L$ (from \autoref{eq:Hs}) and $C_{j_2}^{OPT_f}\geq \sum_{k=1}^{j_2} p_k + L + p_{j_2}$ (from \autoref{lemma:lbstart}).
Thus, we have 
\begin{align}
\sum_{k=1}^{j_2} C_k&= \left(C_1+C_{j_2}\right)+\sum_{k=2}^{j_2-1}C_j\\
&\leq \left(2p_1+L+\sum_{k=1}^{j_2}p_k+p_{j_2-1}+p_{j_2}+2L\right)+\sum_{k=2}^{j_2-1}C_j^{OPT_f}\\
&\leq 1.5\left(C_1^{OPT_f}+C_{j_2}^{OPT_f}\right)+\sum_{k=2}^{j_2-1}C_j^{OPT_f}\leq 1.5\sum_{k=1}^{j_2}C_j^{OPT_f}
\end{align}

Therefore, $\Calg=\sum_{j=1}^n C_j\leq 1.5 \sum_{j=1}^n C_j^{OPT_f}=1.5\Copt$, following from the previous statement and \autoref{eq:final_jj2}.

\end{proof}

\subsection{CTP with related processing and delay times}\label{subsec:relatedpL}
In this section we consider variants where at least one of the tasks has a processing time equal to the delay time.
We first reuse \autoref{algo:A}: 

\begin{theorem}
\label{thm:pjpjpj}
\autoref{algo:A} is a factor-$1.5$ approximation for $1\|(p_j, p_j, p_j)\|\Cs$.
\end{theorem}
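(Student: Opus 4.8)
The plan is to prove $C_j \le 1.5\,\Coptf_j$ for every job $j$ in the schedule $\Salg$ produced by \autoref{algo:A} and then sum over all jobs; together with \autoref{lemma:algAtime} this gives the claim. Throughout, recall that here $a_j=b_j=L_j=p_j$, so sorting by non-decreasing $L_j$ in step~\ref{step:A_sort_Lj} coincides with non-decreasing $p_j$, which is exactly the order used in \autoref{lemma:lbfinish}. The lower bound I would rely on is $\Coptf_j \ge \sum_{i=1}^j (a_i+b_i)=2\sum_{i=1}^j p_i$ from \autoref{lemma:lbfinish}. It therefore suffices to establish the clean bound $C_j \le 3\sum_{i=1}^j p_i$, since then $C_j \le \tfrac32\cdot 2\sum_{i=1}^j p_i = 1.5\,\Coptf_j$.

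Because $C_j = S_j + a_j + L_j + b_j = S_j + 3p_j$, the target reduces to $S_j \le 3\sum_{i=1}^{j-1} p_i$. I would split $[0,S_j]$ into busy time and idle time. The busy part is immediate: when \autoref{algo:A} places job $j$, only jobs $1,\dots,j-1$ are on the machine, with total processing time $2\sum_{i=1}^{j-1}p_i$, so the busy time in $[0,S_j]$ is at most $2\sum_{i=1}^{j-1}p_i$. Everything thus hinges on bounding the idle time in $[0,S_j]$ by $\sum_{i=1}^{j-1}p_i$.

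First I would establish a structural lemma analogous to \autoref{obs:algA}: since the jobs are processed in non-decreasing $p_j$ order, $a_j$ is placed at the earliest time point $t'$ with $[t',t'+p_j]$ idle, and the matching slot $[t'+2p_j,t'+3p_j]$ for $b_j$ is then automatically idle. The induction mirrors that of \autoref{obs:algA}, with the within-job relation $b\le a$ replaced by the cross-job monotonicity $p_i\le p_j$ coming from the sorting: once all earlier first tasks complete by $t'$ (i.e.\ $S_i+p_i\le t'$ for $i<j$), each earlier job finishes by $C_i=(S_i+p_i)+2p_i\le t'+2p_i\le t'+2p_j$, which is exactly the start time of $b_j$, so no second task can block the $b_j$-slot. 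The same argument shows that first tasks start in index order and that the machine is busy immediately before every $a_j$ (for $j>1$).

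With this in hand, the idle bound follows by charging idle time to delay gaps. I would argue that every idle instant $x\in[0,S_j]$ lies inside the delay gap $[S_i+p_i,S_i+2p_i]$ of some already-scheduled job $i<j$: taking the task $\tau$ ending last at or before $x$, if $\tau$ is a first task $a_i$ then necessarily $x<S_i+2p_i$ (otherwise $b_i$ would be active at $x$), while if $\tau$ is a second task then the structural lemma forces the next task after $x$ to be a second task $b_m$ with $a_m$ already completed before $x$, placing $x$ in $[S_m+p_m,S_m+2p_m]$. Since only jobs $1,\dots,j-1$ are present, the idle region is contained in $\bigcup_{i<j}[S_i+p_i,S_i+2p_i]$, whose total length is $\sum_{i=1}^{j-1}p_i$. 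Combining the two estimates, $S_j\le 2\sum_{i=1}^{j-1}p_i+\sum_{i=1}^{j-1}p_i=3\sum_{i=1}^{j-1}p_i$, hence $C_j\le 3\sum_{i=1}^{j}p_i\le 1.5\,\Coptf_j$, and summing over $j$ yields $\Calg\le 1.5\,\Copt$. The main obstacle is the structural lemma: unlike in \autoref{obs:algA}, the processing times vary, so one must carefully verify, using the sorted order, both that the $b_j$-slot is never blocked and that idle time cannot escape the earlier jobs' delay gaps; the remaining accounting is routine.
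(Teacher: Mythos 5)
Your proof is correct, and it ends exactly where the paper's does: the per-job bound $C_j \le 3\sum_{i=1}^j p_i$ combined with $\Coptf_j \ge 2\sum_{i=1}^j p_i$ from \autoref{lemma:lbfinish}, summed over $j$. The difference lies entirely in how you justify the intermediate bound. The paper obtains it in one line from the observation that both the first and the second tasks appear in non-decreasing $p_j$ order in $\Salg$ — equivalently, since scheduling job $j$ at $\max_{i<j}C_i$ is always feasible and the algorithm schedules as soon as possible, $S_j \le \max_{i<j} C_i \le 3\sum_{i<j}p_i$ follows immediately by induction, whence $C_j = S_j + 3p_j \le 3\sum_{i\le j}p_i$. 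You instead decompose $[0,S_j]$ into busy time (at most $2\sum_{i<j}p_i$) and idle time, prove a structural lemma extending \autoref{obs:algA} to varying processing times, and charge every idle instant to the delay gap $[S_i+p_i,S_i+2p_i]$ of an earlier job to bound the idle time by $\sum_{i<j}p_i$. All of these steps check out (in particular, the minimality in your structural lemma correctly rules out a first task immediately following a gap, which is what makes the charging in your second case work), but they prove considerably more structure than the factor-$1.5$ bound requires; the payoff is a sharper picture of where the idle time actually sits, which is not needed here. Nothing is missing — your argument is just a longer road to the same inequality.
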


\begin{proof}
Due to \autoref{lemma:algAtime}, it remains to prove that $\Calg\leq 1.5\Copt$.
Observe that both first and the second tasks are in non-decreasing $p_j$ order in $\Salg$.
This means the completion time $C_j$ of job $j$ is at most $3 \sum_{i=1}^j p_i$.
Thus, we have $\Calg =\sum_{j=1}^n C_j \leq \sum_{j=1}^n \left(3 \sum_{i=1}^j p_i\right)\leq 1.5\Copt$, where the last inequality follows from \autoref{lemma:lbfinish}. 
\end{proof}
Now, consider \autoref{algo:C}.

\begin{algorithm}
\caption{}
\label{algo:C}
\textbf{Input}: a CTP instance with $L_j = b_j$ for each job $j$

\textbf{Output}: a schedule $\sigma$ for this instance
\begin{enumerate}
    \item Sort the jobs in non-decreasing $a_j + p_j$ order. In the following, let the jobs be indexed in this order.
    \item In this order, schedule the first task of each job right after the second task of the previously scheduled job has completed, and its second task according to the delay time. 
\end{enumerate}
\end{algorithm}

\begin{theorem}
\label{thm:ajpjpj}
\autoref{algo:C} is a factor-$2$ approximation for $1\|(a_j, p_j, p_j)\|\Cs$.
\end{theorem}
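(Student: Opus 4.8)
The plan is to first dispatch feasibility and running time, then bound $\Calg$ directly against $\Copt$ using only \autoref{lemma:lbfinish}. For the preliminaries: sorting costs $\bigO(n\log n)$ and the single sequential pass costs $\bigO(n)$. Feasibility is immediate from the construction, since each job is placed entirely after the second task of its predecessor completes; the only idle periods are the delay windows of length $L_j=p_j$ between $a_j$ and $b_j$, into which nothing is inserted, so no two tasks overlap and every exact-delay constraint holds by definition.

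The heart of the argument is the observation that the produced schedule is purely sequential, so $S_j=C_{j-1}$ and each job occupies exactly $a_j+L_j+b_j=a_j+2p_j$ units of time. Hence $C_j=\sum_{i=1}^{j}(a_i+2p_i)$ and $\Calg=\sum_{j=1}^{n}\sum_{i=1}^{j}(a_i+2p_i)$. I would then split this into $\sum_{j}\sum_{i\le j}(a_i+p_i)+\sum_{j}\sum_{i\le j}p_i$. The crucial point is that the algorithm's sort key, non-decreasing $a_j+p_j$, coincides with the non-decreasing $a_j+b_j$ order required by \autoref{lemma:lbfinish}, because $b_j=p_j$ here; thus the first double sum is at most $\Copt$. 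For the second double sum, using $a_i\ge 0$ gives $\sum_{j}\sum_{i\le j}p_i\le\sum_{j}\sum_{i\le j}(a_i+p_i)\le\Copt$ as well. Adding the two bounds yields $\Calg\le 2\Copt$, which is the claimed factor.

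I do not expect a serious obstacle, as this estimate is notably cleaner than those for Algorithms~A and~B: the deliberately ``wasted'' delay gaps are precisely what give $C_j$ its tidy closed form, and a single application of the finishing-job lower bound (applied twice to the same quantity) suffices, so the starting-job bound \autoref{lemma:lbstart} is not even needed. The only points requiring care are verifying that the delay windows are genuinely left empty, so that the sequential formula for $C_j$ is exact rather than an overestimate, and checking that the sort order matches the hypothesis of \autoref{lemma:lbfinish}; both are routine.
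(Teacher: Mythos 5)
Your proposal is correct and follows essentially the same route as the paper: the schedule is sequential so $C_j=\sum_{i\leq j}(a_i+2p_i)$, and comparing termwise with the lower bound $\Copt\geq\sum_j\sum_{i\leq j}(a_i+p_i)$ from \autoref{lemma:lbfinish} (valid because the sort key $a_j+p_j$ equals $a_j+b_j$ here) gives the factor $2$. Your split of $a_i+2p_i$ into $(a_i+p_i)+p_i$ is just a rephrasing of the paper's direct comparison $a_i+2p_i\leq 2(a_i+p_i)$.
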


\begin{proof}
It is straightforward that \autoref{algo:C} runs in $\bigO(n \log n)$ time and always produces a feasible solution.
We have $\Calg \leq  \sum_{j=1}^n \sum_{i=1}^j (a_j + 2p_j)$ from \autoref{algo:C} and $\Copt \geq  \sum_{j=1}^n \sum_{i=1}^j (a_j + p_j)$ from \autoref{lemma:lbfinish}.
Therefore, the Theorem follows.
\end{proof}

If we modify the input, as well as the first step of \autoref{algo:C}, such that it takes instances of CTP with $L_j = a_j$, and sorts the jobs in non-decreasing $p_j + b_j$ order, we can approximate $1\|(p_j, p_j, b_j)\|\Cs$ with  factor of $2$. 
The proof is analogous to the proof of \autoref{thm:ajpjpj}.

\begin{theorem}\label{thm:pjpjbj}
Modified \autoref{algo:C} is a factor-$2$ approximation for $1\|(p_j, p_j, b_j)\|\Cs$.
\end{theorem}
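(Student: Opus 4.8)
The plan is to mirror the proof of \autoref{thm:ajpjpj} under the natural symmetry between first and second tasks. In the problem $1\|(p_j, p_j, b_j)\|\Cs$ we have $a_j = p_j$ and $L_j = p_j$ for each job, so the delay time equals the first task's processing time, exactly the mirror image of the situation $L_j = b_j$ handled by \autoref{algo:C}. I would first state precisely what the modified algorithm does: it sorts jobs in non-decreasing $p_j + b_j$ order and then, in this order, schedules the tasks so that consecutive second tasks are packed tightly, placing the first task of each job according to the delay $L_j = a_j = p_j$. The key structural observation is that because $L_j = p_j = a_j$, there is no idle time forced between the second tasks: the gap of length $L_j$ between $a_j$ and $b_j$ is exactly the length of $a_j$ itself, so the first task of one job can be slotted into the delay window of the previous job's pair, and the second tasks can follow each other without gaps.

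Next I would establish feasibility and the $\bigO(n\log n)$ running time, noting these follow immediately as in \autoref{thm:ajpjpj}: sorting dominates the running time, and the packing rule never creates an intersection by construction. The heart of the argument is the bound $\Calg \leq \sum_{j=1}^n \sum_{i=1}^j (2p_i + b_i)$, obtained by showing that the completion time $C_j$ of the $j^{\textnormal{th}}$ job is at most $\sum_{i=1}^j (2p_i + b_i)$. This comes from the fact that, in the tight packing, everything completing by $C_j$ consists of the first and second tasks of jobs $1,\dots,j$ (contributing $p_i + b_i$ each) plus one delay window of length $p_j = a_j$ for job $j$ itself; more carefully, each job contributes its first task $p_i$, its delay $p_i$ (equal to the first task), and its second task $b_i$, but the delays overlap with later tasks in the packing, so the effective charge per job is $2p_i + b_i$ against a lower bound that counts $p_i + b_i$. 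I would pair this with the lower bound $\Copt \geq \sum_{j=1}^n \sum_{i=1}^j (p_i + b_i)$, which follows from \autoref{lemma:lbfinish} after observing that the non-decreasing $p_j + b_j$ order is exactly the non-decreasing $a_i + b_i$ order since $a_i = p_i$.

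Comparing termwise, since $2p_i + b_i \leq 2(p_i + b_i)$ for all $i$, we obtain $\Calg \leq 2\Copt$, giving the factor-$2$ approximation. The main obstacle I anticipate is making the completion-time bound on $\Calg$ fully rigorous: one must verify that the modified scheduling rule genuinely produces a schedule in which the first task of each job fits into the delay window created by $L_j = a_j$ without displacing the tight packing of second tasks, so that no extra idle time accumulates beyond the single $a_j = p_j$ charge per job. Since the original \autoref{algo:C} handles the symmetric case $L_j = b_j$ by packing first tasks tightly and the authors explicitly assert the proof is analogous, I would lean on that symmetry rather than re-deriving the packing structure from scratch, and simply indicate that reversing the roles of the first and second tasks transforms one algorithm and its analysis into the other.
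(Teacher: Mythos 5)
Your final inequalities are the right ones, and they are exactly what the paper's own (one-line, ``analogous to \autoref{thm:ajpjpj}'') proof rests on: $\Calg \leq \sum_{j=1}^n \sum_{i=1}^j (2p_i + b_i)$ against $\Copt \geq \sum_{j=1}^n \sum_{i=1}^j (p_i + b_i)$ from \autoref{lemma:lbfinish}, whence the factor $2$. The gap is in the schedule you describe on the way there. Modified \autoref{algo:C} does \emph{not} interleave anything: its second step is unchanged, so it schedules the first task of each job only after the second task of the previously scheduled job has completed, leaving the delay window of length $L_j = p_j$ idle. Each job therefore occupies a contiguous window of length $p_j + L_j + b_j = 2p_j + b_j$, giving $C_j = \sum_{i=1}^j (2p_i + b_i)$ with equality and trivial feasibility. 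Your ``key structural observation'' --- that $a_{j+1}$ can be slotted into the delay window of job $j$ because that window has length $L_j = p_j$, and that the second tasks can then follow each other without gaps --- is false in general: the sort is by $p_j + b_j$, which controls neither $p_{j+1}$ versus $p_j$ nor $p_{j+1}$ versus $b_j$. For instance, with $p_1 = 1, b_1 = 1$ and $p_2 = 5, b_2 = 1$, job $1$ precedes job $2$ in the sorted order, but $a_2$ (length $5$) does not fit into the delay window of job $1$ (length $1$), and packing $b_2$ flush against $b_1$ would force $a_2$ to start before time $0$. So the obstacle you flag in your closing paragraph is real and cannot be overcome as you set it up.

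Fortunately it also does not need to be overcome, because the interleaving buys you nothing for a factor-$2$ bound. Drop the packing claim entirely and argue as in \autoref{thm:ajpjpj} with the roles of the two tasks exchanged: the sequential schedule has $C_j = \sum_{i=1}^j (2p_i + b_i) \leq 2\sum_{i=1}^j (p_i + b_i) \leq 2\,\Coptf_j$, where the last step uses \autoref{lemma:lbfinish} together with your (correct) remark that non-decreasing $p_i + b_i$ order is non-decreasing $a_i + b_i$ order here; summing over $j$ gives $\Calg \leq 2\Copt$. That is all the paper means by ``analogous''.
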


\section{Bi-objective approximation}
\label{sec:biobj}

In this section, we give constant-factor $(\rho_1, \rho_2)$-approximations for all variants of the bi-objective $1\|(a_j,L_j,b_j)\|\{\Cmax, \Cs\}$ problem for which we gave constant factor approximations on, the $\Cs$-objective in this work.

Stein and Wein~\cite{stein97} defined two simple conditions on scheduling problems: \emph{Truncation} (deleting jobs from a valid schedule results in a valid partial schedule) and \emph{Composition} (a simple way of appending two valid partial schedules results in a valid schedule) and proved the following:

\begin{proposition}[Stein and Wein~\cite{stein97}, Corollary 3]
\label{prop:1}
For any scheduling problem satisfying the conditions Truncation and Composition, if there exists an $\alpha$-approximation algorithm for the minimization of makespan and a $\beta$-approximation algorithm for the minimization of sum of completion times, there exists an $(\alpha(1+\delta),\beta(\frac{\delta+1}{\delta}))$-algorithm for any $\delta > 0$.
\end{proposition}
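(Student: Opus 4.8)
The plan is to build a single schedule by splitting the jobs into an ``early'' group handled by the $\Cs$-algorithm and a ``late'' group handled by the makespan algorithm, glued together via the Composition property. Let $\mathrm{OPT}_{\max}$ denote the optimal makespan and set $M:=\alpha\cdot\mathrm{OPT}_{\max}$, an upper bound on the makespan of the schedule returned by the $\alpha$-approximation. First I would run the $\beta$-approximation for $\Cs$ to obtain a schedule $\sigma_{\Sigma}$ with completion times $C^{\Sigma}_j$, and run the $\alpha$-approximation for the makespan to obtain the value $M$. The threshold governing the split will be $\delta M$.

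Next I would apply Truncation to $\sigma_\Sigma$: deleting every job with $C^{\Sigma}_j>\delta M$ leaves a valid partial schedule $P_1$ for the remaining jobs, and since no retained task is moved, $P_1$ is confined to $[0,\delta M]$. Let $\J'$ be the set of deleted (late) jobs. I would then run the $\alpha$-approximation for the makespan on $\J'$ alone to get a partial schedule $P_2$. The crucial point is that, applying Truncation to an optimal makespan schedule of the full instance, the optimal makespan of the subinstance $\J'$ is at most $\mathrm{OPT}_{\max}$, so the makespan of $P_2$ is at most $\alpha\cdot\mathrm{OPT}_{\max}=M$. Finally, Composition lets me append $P_2$ directly after $P_1$, yielding one feasible schedule $\sigma$ for all jobs.

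It then remains to verify the two factors. Since $P_1$ ends by $\delta M$ and $P_2$ contributes makespan at most $M$, the schedule $\sigma$ finishes by $\delta M+M=(1+\delta)M=\alpha(1+\delta)\,\mathrm{OPT}_{\max}$, which gives the first ratio. For the sum of completion times, every job kept in $P_1$ retains its completion time $C^{\Sigma}_j$ from $\sigma_\Sigma$, while every job $j\in\J'$ completes in $\sigma$ by time $(1+\delta)M$; because $C^{\Sigma}_j>\delta M$ forces $M<C^{\Sigma}_j/\delta$, this completion time is at most $\tfrac{\delta+1}{\delta}C^{\Sigma}_j$. Summing and using $\tfrac{\delta+1}{\delta}\geq 1$ together with the guarantee of the $\beta$-approximation gives $\Cs(\sigma)\leq\tfrac{\delta+1}{\delta}\,\beta\cdot\mathrm{OPT}_{\Sigma}$, the second ratio, where $\mathrm{OPT}_{\Sigma}$ is the optimal total completion time.

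I expect the main subtlety to be the monotonicity step: arguing that the optimal makespan of the late-job subinstance $\J'$ does not exceed that of the whole instance. This is where Truncation must be invoked a second time, now on a hypothetical optimal makespan schedule rather than on algorithmic output, and it is the only place where the structural hypotheses genuinely do work beyond bookkeeping. Checking that Composition permits appending $P_2$ at the end of $P_1$ (no later than $\delta M$) without creating infeasibility is the other hypothesis-dependent point; everything else is arithmetic.
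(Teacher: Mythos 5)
Your construction is correct and is precisely the standard Stein--Wein truncate-and-append argument: truncate the $\Cs$-schedule at $\delta M$, reschedule the late jobs with the makespan algorithm (using Truncation to see the subinstance's optimal makespan cannot increase), and glue via Composition. Note that the paper itself imports this proposition from Stein and Wein without proof, so there is no in-paper argument to compare against; your reconstruction matches the cited source's proof and all steps, including the monotonicity of the optimal makespan under taking subinstances, are justified.
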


Note that all considered coupled task problem variants fulfill these conditions. 
With this result in hand, we can now combine our $\beta$-approximation algorithms for the sum of completion times with previous $\alpha$-approximation algorithms for the minimization of makespan to get $(\rho_1,\rho_2)$-approximations for all approximated $\Cs$ problems.
We choose $\delta$ in such a way that the maximum of the two approximation factor $\max(\rho_1, \rho_2)$ is minimized.
This is a common choice in bi-objective optimization as the goal is to get the best balanced result for both objectives simultaneously.

We give these results in \autoref{table:biobjres}. The first column of the table specifies the specific variant of $1\|(a_j,L_j,b_j)\|\{\Cmax, \Cs\}$ to be approximated, with the variant identified by its job characteristics. 
The second column gives the $(\rho_1, \rho_2)$-approximation factor for each variant. 
As in our case $\rho_1$ always equals $\rho_2$, we just give one value in this column.
In the remaining columns we give the specific $\alpha$ and $\beta$ values used in \autoref{prop:1}, with a reference to their origin, as well as our choice of $\delta$.

The run time of this algorithm implied by \autoref{prop:1} is the sum of the run times of both the $\alpha$- and the $\beta$-approximation algorithms. As all used approximation algorithms for both $\Cmax$ and $\Cs$ problems run in polynomial time, all $(\rho_1, \rho_2)$-approximations given in \autoref{table:biobjres} can be computed in polynomial time as well.

\begin{table}
\centering
\begin{tabular}{ l | l | l l l } 
& $\rho_1= \rho_2$ & $\alpha$ & $\beta$ & $\delta$ \\
\hline
$(a, L_j, b)$ & $6.5$ & $3.5$~(\cite{ageev06}) & $3$~(Thm.~\ref{lemma:aLjbgreaterb}) & $6/7$ \\ 
$(a_j, L, b_j)$ & $6$ & $3$~(\cite{ageev2016}) & $3$~(Thm.~\ref{lemma:ajLbj}) & $1$ \\
$(a, L_j, b, b \leq a)$ & $5.5$ & $3.5$~(\cite{ageev06}) & $2$~(Thm.~\ref{lemma:aLjbsmallerb}) & $4/7$ \\
$(a_j, p_j, p_j)$ & $5.5$ & $3.5$~(\cite{ageev06}) & $2$~(Thm.~\ref{thm:ajpjpj}) & $4/7$ \\
$(p_j, p_j, b_j)$ & $5.5$ & $3.5$~(\cite{ageev06}) & $2$~(Thm.~\ref{thm:pjpjbj}) & $4/7$ \\
$(p_j, p_j, p_j)$ & $4$ & $2.5$~(\cite{ageev06}) & $1.5$~(Thm.~\ref{thm:pjpjpj}) & $3/5$ \\
$(1,L_j,1)$ & $3.25$ & $1.75$~(\cite{ageev07}) & $1.5$~(Thm.~\ref{lemma:1Lj1}) & $6/7$ \\
$(p_j,L,p_j)$ & $3$ & $1.5$~(\cite{ageev2016}) & $1.5$~(Thm.~\ref{lemma:pjLpj}) & $1$ \\
\end{tabular}
\caption{Results on bi-objective $(\rho_1, \rho_2)$-approximation for CPT variants with $\Cmax$ and $\Cs$ objectives}
\label{table:biobjres}
\end{table}

\section{Conclusion}
\label{sec:concl}
In this paper, we deal with the single machine coupled task scheduling problem, with the minimization of the total completion time as our objective function.
Our work extends the complexity results of Chen and Zhang~\cite{chen2021} and Kubiak~\cite{kubiak22} by introcuding two new $\NP$-hard variants, and provides several polynomial-time constant-factor approximation algorithms.
To do this, we were able to modify several known algorithmic concepts used in coupled task makespan minimization, but some of our proofs on approximation factors required more sophisticated ideas. 
E.g., in the proof of \autoref{lemma:pjLpj}, the original idea for the approximation factor only worked for jobs scheduled after a certain number of other jobs had already been processed, and a careful analysis of the approximation factor for the jobs before this cut-off was needed to get the result on hand.

We also give the first results on bi-objective approximation in the coupled task setting: We use a result from Stein and Wein~\cite{stein97}, together with constant-factor approximations on the makespan objective taken from the literature, to give bi-objective constant-factor approximations for the problem of both minimizing the sum of completion time and the makespan simultaneously. We do this for all variants of CTP with the sum of completion times objective that we managed to approximate with a constant factor.

Although we did manage to provide approximation algorithms for several coupled task scheduling problem variants with the sum of completion times objective, it is still unknown if there is a constant-factor approximation algorithm for a few important cases: for the most general case; for the cases where only one of the tasks has a fixed processing time; and for the $(p_j,L_j,p_j)$ variant. This stems from the fact that all algorithms presented in this work make use of some unique ordering of the jobs implied by the job characteristics on either the task lengths or delay lengths. In the aforementioned cases, there exists no such unique ordering using only task lengths or delay lengths.
Inapproximability results also are of interest for the problems presented in this paper, as they do exist for most CTP variants with the makespan objective~\cite{ageev06,ageev2016}. To our best knowledge, there are no such results for CTP with the total completion time objective. We point to these two open questions as suggestions for future research.

\paragraph{Acknowledgments}
David Fischer was supported by DFG grant MN 59/4-1.
P\'eter Gy\"orgyi was supported by the National Research, Development and Innovation Office grant no.~TKP2021-NKTA-01 and by the J\'anos Bolyai Research Scholarship of the Hungarian Academy of Sciences.
This research was supported by DAAD with funds of the Bundesministerium f\"ur Bildung und Forschung (BMBF).
We would also like to thank Matthias Mnich from Hamburg University of Technology and Krist\'of B\'erczi from Eötvös Lor\'and University for organizing the academic exchange between the authors of this paper.
The authors are grateful to Matthias Mnich and Tobias Stamm for their helpful comments and help in editing the manuscript.

\paragraph{Author Contributions}
Both authors contributed equally to this work.

%
%
%
\bibliographystyle{abbrvnat}
\bibliography{mybibliography}

\begin{thebibliography}{}

\bibitem[Ageev and Ivanov, 2016]{ageev2016}
Ageev, A. and Ivanov, M. (2016).
\newblock Approximating coupled-task scheduling problems with equal exact
  delays.
\newblock In {\em Discrete optimization and operations research}, volume 9869
  of {\em Lecture Notes in Comput. Sci.}, pages 259--271.

\bibitem[Ageev and Baburin, 2007]{ageev07}
Ageev, A.~A. and Baburin, A.~E. (2007).
\newblock Approximation algorithms for {UET} scheduling problems with exact
  delays.
\newblock {\em Oper. Res. Lett.}, 35(4):533–540.

\bibitem[Ageev and Kononov, 2006]{ageev06}
Ageev, A.~A. and Kononov, A.~V. (2006).
\newblock Approximation algorithms for scheduling problems with exact delays.
\newblock In {\em Approximation and online algorithms}, volume 4368 of {\em
  Lecture Notes in Comput. Sci.}, pages 1--14.

\bibitem[Angel et~al., 2003]{angel03}
Angel, E., Bampis, E., and Kononov, A. (2003).
\newblock On the approximate tradeoff for bicriteria batching and parallel
  machine scheduling problems.
\newblock {\em Theor. Comput. Sci.}, 306(1):319--338.

\bibitem[Bampis and Kononov, 2005]{bampis05}
Bampis, E. and Kononov, A. (2005).
\newblock Bicriteria approximation algorithms for scheduling problems with
  communications delays.
\newblock {\em J. Sched.}, 8(4):281–294.

\bibitem[Bessy and Giroudeau, 2019]{bessy19}
Bessy, S. and Giroudeau, R. (2019).
\newblock Parameterized complexity of a coupled-task scheduling problem.
\newblock {\em J. Sched.}, 22(3):305–313.

\bibitem[Békési et~al., 2022]{bekesi22}
Békési, J., Dósa, G., and Galambos, G. (2022).
\newblock A first fit type algorithm for the coupled task scheduling problem
  with unit execution time and two exact delays.
\newblock {\em Eur. J. Oper. Res.}, 297(3):844--852.

\bibitem[Chen and Zhang, 2021]{chen2021}
Chen, B. and Zhang, X. (2021).
\newblock Scheduling coupled tasks with exact delays for minimum total job
  completion time.
\newblock {\em J. Sched.}, 24(2):209--221.

\bibitem[Condotta and Shakhlevich, 2012]{condotta12}
Condotta, A. and Shakhlevich, N.~V. (2012).
\newblock Scheduling coupled-operation jobs with exact time-lags.
\newblock {\em Discrete Appl. Math.}, 160(16–17):2370–2388.

\bibitem[Deb, 2014]{deb14}
Deb, K. (2014).
\newblock {\em Multi-objective Optimization}, pages 403--449.

\bibitem[Elshafei et~al., 2004]{elshafei04}
Elshafei, M., Sherali, H.~D., and Smith, J.~C. (2004).
\newblock Radar pulse interleaving for multi‐target tracking.
\newblock {\em Naval Research Logistics}, 51:72--94.

\bibitem[Farina, 1980]{farina80}
Farina, A. (1980).
\newblock Multitarget interleaved tracking for phased-array radar.
\newblock {\em IEE PROC-F}, 127:312--318(6).

\bibitem[Garey and Johnson, 1979]{garey79}
Garey, M. and Johnson, D. (1979).
\newblock {\em Computers and Intractability: A Guide to the Theory of
  NP-completeness}.
\newblock Mathematical Sciences Series.

\bibitem[Graham et~al., 1979]{graham79}
Graham, R.~L., Lawler, E.~L., Lenstra, J.~K., and Rinnooy~Kan, A. H.~G. (1979).
\newblock Optimization and approximation in deterministic sequencing and
  scheduling: a survey.
\newblock volume~5, pages 287--326.

\bibitem[Hoogeveen, 2005]{hoogeveen05}
Hoogeveen, H. (2005).
\newblock Multicriteria scheduling.
\newblock {\em Eur. J. Oper. Res.}, 167:592--623.

\bibitem[Huo et~al., 2009]{huo09}
Huo, Y., Li, H., and Zhao, H. (2009).
\newblock Minimizing total completion time in two-machine flow shops with exact
  delays.
\newblock {\em Comput. Oper. Res.}, 36(6):2018–2030.

\bibitem[Hwang and Lin, 2011]{hwang11}
Hwang, F.~J. and Lin, B. M.~T. (2011).
\newblock Coupled-task scheduling on a single machine subject to a
  fixed-job-sequence.
\newblock {\em Comput. Ind. Eng.}, 60(4):690--698.

\bibitem[Jiang et~al., 2022]{jiang22}
Jiang, X., Lee, K., and Pinedo, M.~L. (2022).
\newblock Approximation algorithms for bicriteria scheduling problems on
  identical parallel machines for makespan and total completion time.
\newblock {\em Eur. J. Oper. Res.}

\bibitem[Khatami and Salehipour, 2021a]{khatami21_1}
Khatami, M. and Salehipour, A. (2021a).
\newblock A binary search algorithm for the general coupled task scheduling
  problem.
\newblock {\em 4OR}, 19:1--19.

\bibitem[Khatami and Salehipour, 2021b]{khatami21_2}
Khatami, M. and Salehipour, A. (2021b).
\newblock Coupled task scheduling with time-dependent processing times.
\newblock {\em J. Sched.}, 24:223--236.

\bibitem[Khatami et~al., 2020]{khatami19}
Khatami, M., Salehipour, A., and Cheng, T. (2020).
\newblock Coupled task scheduling with exact delays: Literature review and
  models.
\newblock {\em Eur. J. Oper. Res.}, 282(1):19--39.

\bibitem[Kubiak, 2022]{kubiak22}
Kubiak, W. (2022).
\newblock A note on scheduling coupled tasks for minimum total completion time.
\newblock {\em Ann. Oper. Res.}, pages 1--4.

\bibitem[Leung et~al., 2007]{leung07}
Leung, J. Y.-T., Li, H., and Zhao, H. (2007).
\newblock Scheduling two-machine flow shops with exact delays.
\newblock {\em Int. J. Found. Comput. Sci.}, 18:341--359.

\bibitem[Orman and Potts, 1997]{orman97}
Orman, A.~J. and Potts, C.~N. (1997).
\newblock On the complexity of coupled-task scheduling.
\newblock {\em Discrete Appl. Math.}, 72(1-2):141--154.

\bibitem[Shapiro, 1980]{shapiro80}
Shapiro, R.~D. (1980).
\newblock Scheduling coupled tasks.
\newblock {\em Naval Res. Logist. Quart.}, 27(3):489--498.

\bibitem[Simonin et~al., 2011]{simonin11}
Simonin, G., Giroudeau, R., and K\"{o}nig, J. (2011).
\newblock Complexity and approximation for scheduling problem for a torpedo.
\newblock {\em Comput. Ind. Eng.}, 61(2):352–356.

\bibitem[Stein and Wein, 1997]{stein97}
Stein, C. and Wein, J. (1997).
\newblock On the existence of schedules that are near-optimal for both makespan
  and total weighted completion time.
\newblock {\em Oper. Res. Lett.}, 21(3):115--122.

\bibitem[Yu et~al., 2004]{yu04}
Yu, W., Hoogeveen, H., and Lenstra, J.~K. (2004).
\newblock Minimizing makespan in a two-machine flow shop with delays and
  unit-time operations is {NP}-hard.
\newblock {\em J. Sched.}, 7(5):333--348.

\end{thebibliography}
\end{document}